\documentclass[11pt]{amsart}
\usepackage{geometry}                
\geometry{letterpaper}                   
\usepackage{graphicx,stmaryrd}
\usepackage{amssymb,amsmath}
\usepackage{epstopdf}
\usepackage{tikz}
\usepackage{lscape}
\usepackage{color}
\usepackage{todonotes}
\usepackage{mathrsfs}
\usepackage{longtable}

\usepackage{hyperref}

\allowdisplaybreaks


\theoremstyle{plain}
\newtheorem{theorem}{Theorem}[section]

\newtheorem{corollary}[theorem]{Corollary}
\newtheorem{proposition}[theorem]{Proposition}
\newtheorem{lemma}[theorem]{Lemma}

\theoremstyle{definition}
\newtheorem{definition}[theorem]{Definition}
\newtheorem{example}[theorem]{Example}
\newtheorem{remark}[theorem]{Remark}

\newcommand{\I}{\mathcal{I}}
\newcommand{\T}{\mathcal{T}}

\newcommand{\A}{\mathcal{A}}
\newcommand{\B}{\mathcal{B}}
\newcommand{\M}{\mathcal{M}}

\newcommand{\NN}{\mathbb{N}}
\newcommand{\ZZ}{\mathbb{Z}}

\newcommand{\C}{\mathcal{C}}

\newcommand{\Z}{\mathbb{Z}}
\newcommand{\F}{\mathbb{F}}

\newcommand{\G}{\mathcal{G}}
\newcommand{\R}{\ensuremath \mathbb{R}}

\newcommand\restr[2]{{
  \left.\kern-\nulldelimiterspace 
  #1 
  \vphantom{|} 
  \right|_{#2} 
  }}

\newcommand{\op}{\mathrm{op}}

\renewcommand{\max}{\mathrm{max}}

\newcommand{\rk}{\mathrm{rk}}
\newcommand{\crk}{\mathrm{crk}}
\newcommand{\calP}{\mathcal{P}}

\newcommand{\bs}{\backslash}
\newcommand{\Fl}{\mathcal{F}l}

\newcommand{\gd}{\ensuremath{\delta}}

\newcommand{\gm}{\ensuremath{\mu}}

\newcommand{\gs}{\ensuremath{\sigma}}




\DeclareGraphicsRule{.tif}{png}{.png}{`convert #1 `dirname #1`/`basename #1 .tif`.png}

\title{Chain Tutte polynomials}
\author{Max Wakefield}
\thanks{}
\address{US Naval Academy \footnote{The views expressed in this article are those of the author and do not reflect the official policy or position of the U.S. Naval Academy, Department of the Navy, the Department of Defense, or the U.S. Government.}\\
  572-C Holloway Rd\\
  Annapolis MD, 21402 USA}
\email[]{wakefiel@usna.edu}


\begin{document}
\maketitle

\begin{abstract} The Tutte polynomial and Derksen's $\mathcal{G}$-invariant are the universal deletion-contraction and valuative matroid and polymatroid invariants, respectively. There are only a handful of well known invariants (like the matroid Kazhdan-Lusztig polynomials) between (in terms of fineness) the Tutte polynomial and Derksen's $\mathcal{G}$-invariant. The aim of this study is to define a spectrum of generalized Tutte polynomials to fill the gap between the Tutte polynomial and Derksen's $\mathcal{G}$-invariant. These polynomials are built by taking repeated convolution products of universal Tutte characters studied by Dupont, Fink, and Moci and using the framework of Ardila and Sanchez for studying valuative invariants. We develop foundational aspects of these polynomials by showing they are valuative on generalized permutahedra and present a generalized deletion-contraction formula. We apply these results on chain Tutte polynomials to obtain formulas for the M\"obius polynomial, the opposite characteristic polynomial, a generalized M\"obius polynomial, Ford's expected codimension of a matroid variety, and Derksen's $\mathcal{G}$-invariant. \end{abstract}

\section{Introduction}

For any set $X$ we denote the set of subsets of $X$ by $2^X$. Also, we use $\NN=\{0,1,2,\dots\}$ and for $n\in \NN \backslash \{0\}$ we let $[n]=\{1,2,\dots ,n\}$. Notationally we will denote a sequence of objects $(n_1,\ldots,n_k)$ by $(n_i)_1^k$ or $(n_i)$ when the indices range is clear from the context. A \emph{polymatroid} is a pair $M=(\A,\rk)$ where $\A$ is a finite set, called the ground set and $\rk :2^\A\to \NN$ is a function that satisfies \begin{enumerate}

\item  $\rk (\emptyset)=0$

\item for all $X\subseteq Y\subseteq \A$, $\rk(X)\leq \rk(Y)$

\item for all $X\subseteq Y\subseteq \A$, $\rk(X)+\rk(Y) \geq \rk(X\cap Y) +\rk (X\cup Y)$.

\end{enumerate} A \emph{matroid} is a polymatroid that also satisfies for all $a\in \A$, $\rk(a)\in \{0,1\}$. There are many equivalent definitions of matroids and we follow the standard conventions in \cite{Oxley} and \cite{Ardila-15}. The set of \emph{independent sets} of a matroid is the set \[\I =\{X\subseteq \A | \ \rk(X)=|X|\} \] and one can define a matroid in terms of independent sets as in \cite[Sec 7.2]{Ardila-15}. Given a matroid defined by independent sets $\I$ one can define the rank function $\rk : 2^{\A}\to \NN$ by $\rk(X)=\max \{|I|\ |\ I\in \I, \ I\subseteq X \}.$ This gives a cryptomorphism between the rank function and the independent set definition (again see \cite[Sec 7.2]{Ardila-15}). We will utilize both of these definitions: when we write $M=(\A, \rk)$ we are considering the rank function and when we write $M=(\A ,\I )$ we are considering $\I$ as the set of independent sets. For any polymatroidal function, when the context of the matroid is not clear we will include the polymatroid as a subscript, for example we may denote $\rk$ as $\rk_M$. An element $a\in \A$ is a \emph{loop} in $M$ if $\rk(a)=0$ and a \emph{coloop} if $\rk(\A -a)=\rk(M)-1$. We also say that two elements $a,b\in \A$ which are not loops are \emph{parallel} if $\rk (\{a,b\})=1$. Finally we say that a matroid is \emph{simple} if it has no loops or parallel elements. 

The \emph{Whitney rank generating function of} a polymatroid $M=(\A,\rk)$ is 

$$W_M(a_1;b_1)=\sum\limits_{S\subseteq \A}a_1^{\rk(M)-\rk(S)}b_1^{|S|-\rk(S)}$$ and the \emph{Tutte polynomial} of $M$ is $$T_M(x;y)=W_M(x-1,y-1)=\sum\limits_{S\subseteq \A}(x-1)^{\rk(M)-\rk(S)}(y-1)^{|S|-\rk(S)}.$$

Tutte polynomials were originally defined on graphs by Tutte in \cite{Tutte-poly} while their coefficients were first investigated by Whitney in \cite{Whitney-32} (for some history see \cite{Farr07}). Crapo popularized and generalized Tutte polynomials to matroids in \cite{Crapo-69} . Since then Tutte polynomials have become arguably the most studied invariants on graphs, matroids, hyperplane arrangements, polymatroids, and extended generalized permutahedra. Recently an entire CRC handbook \cite{Tutte-handbook} edited by Ellis-Monaghan and Moffatt is entirely dedicated to the study of Tutte polynomials. Most importantly Tutte polynomials are universal among deletion-contraction invariants, meaning one can formulate any deletion-contraction invariant, like the characteristic polynomial of a matroid which agrees with the chromatic polynomial for graphs, as an evaluation of the Tutte polynomial. 

Recently there has been significant developments on understanding the foundational aspects of Tutte polynomials. Speyer in \cite{Speyer08} showed that the Tutte polynomial over matroids is valuative from the polytope perspective. The work of Dupont, Fink, and Moci in \cite{DFM-19} presents the Tutte polynomial as a specialization of Tutte characters on set species which allows them to examine Tutte polynomials on objects like Delta matroids and prove various convolution formulas. Ardila and Sanchez in \cite{AS-20} conducted a study of valuative invariants and built a framework for understanding them with convolutions. This led to another proof that the Tutte polynomial is valuative but over a much larger class of objects, in particular extended generalized permutahedra. Then, Ferroni and Schr\"oter in \cite{FS-22} studied in detail some foundational valuations on certain small matroid subdivisions. Further recent developments on the Tutte polynomial have brought new results on the structure of the coefficients \cite{CF-22}, different interpretations of the Tutte polynomial \cite{Kockol-21}, and the dimension of the space spanned by its coefficients \cite{Kung-17}.

In \cite{D08} Derksen defined an invariant for polymatroids which has since been studied by many authors. Now we present a definition of this invariant. A \emph{complete chain} of a polymatroid $M=(\A,\rk)$ is a sequence of subsets $\bar{X}=(X_i)_{0}^{|\A|}$ of $\A$ such that $X_0=\emptyset \subset X_1 \subset \cdots \subset X_{|\A|}=\A$. Let $$r(\bar{X})=(\rk(X_1)-\rk(X_0),\rk(X_2)-\rk(X_1),\dots , \rk(X_{|\A|})-\rk(X_{|\A|-1}))$$ be the rank vector of $\bar{X}$. Next choose $\{U_s\}$ as the basis (see \cite{D08} for details of this basis) for the ring of quasi-symmetric functions $\mathrm{Qsym}$ where $s$ runs over all the finite sequences of non-negative integers. The \emph{Derksen $\G$-invariant} of $M$ is \[\G (M)=\sum\limits_{\bar{X}}U_{r(\bar{X})}.\]

Derksen showed in \cite{D08} that his $\G$-invariant is valuative and that the Tutte polynomial can be derived from $\G$. Then in \cite{DF09} Fink and Derksen proved that Derksen's $\G$-invariant is universal among valuative invariants. Hence between the Tutte polynomial and Derksen's $\G$-invariant are the invariants which do not satisfy a deletion-contraction formula but are valuations. Many of these non-deletion-contraction yet valuative invariants, like the matroid Kazhdan-Lusztig polynomials of \cite{EPW-16} and the Billera-Jia-Reiner quasi-symmetric function of a matroid from \cite{BJR09} and the matroid volume polynomial of Eur in \cite{E20}, have not been calculated for some simple families of matroids. One reason is that the known recursions for these invariants are unwieldy. An exception is the result of Ferroni and Larson in \cite{FL-23} where they compute the Kazhdan-Lusztig polynomials for braid matroids using series parallel matroids. A central motivation for this work is to build structures that will help compute these invariants. So, one aim here is to develop a generalized deletion-contraction recursion to help compute valuative matroid invariants. The other main motivation is to build new finer invariants that may help distinguish certain types of matroids. 

The main focus of this note is the following generalization of Tutte polynomials.

\begin{definition}\label{Def-Tutte}

Let $M=(\A,\rk)$ be a polymatroid with ground set $\A$. For a positive integer $k$ let $$\C^k_\A =\left\{ (S_1,\dots ,S_k)\in \big(2^\A\big)^k \ | \ S_1\subseteq S_2\subseteq \cdots \subseteq S_k\right\}$$ and set $\C_{\A}^0=\{()\}$ where we are considering $()$ as the empty chain. The $k^{th}$ \emph{chain Whitney rank generating polynomial} of $M$ is $$W^k_M((a_i)_1^k;(b_i)_1^k)=\sum\limits_{(S_i)_1^k \in \C^k_\A } \prod\limits_{i=1}^k(a_i)^{\rk (M)-\rk (S_i)} (b_i)^{|S_i|-\rk(S_i)}.$$  Then the $k^{th}$ \emph{chain Tutte polynomial} of $M$ is 
$$T^k_M((x_i)_1^k;(y_i)_1^k)=W^k_M((x_i-1)_1^k;(y_i-1)_1^k).$$For convenience we may suppress the variables and just write $T^k_M$. Also, we will define $W^0_M=T_M^0=1$.
\end{definition}

This definition shows that $T^1_M(x_1;y_1)$ is the classic (poly)matroid Tutte polynomial. In section \ref{val-sec} we present the definition of this polynomial for any generalized permutahedron but for the majority of the paper we restrict our study to matroids.  Let $\mathrm{Mat} =\bigoplus \mathrm{Mat}_{r,n}$ be the collection of all matroids where $\mathrm{Mat}_{r,n}$ is the collection of matroids of rank $r$ on a ground set of $n$ elements. Then both the chain Whitney $W_M^k$ and Tutte $T_M^k$ polynomials are matroid invariants and we view them as functions \[W^k: \mathrm{Mat}_{r,n}\to \ZZ [a_1,\ldots ,a_n,b_1, \ldots ,b_n]\] and  \[T^k: \mathrm{Mat}_{r,n}\to \ZZ [x_1,\ldots ,x_n,y_1, \ldots ,y_n] .\] 

Many authors have studied generalizations of Tutte polynomials but they all seem to be different from that given in Definition \ref{Def-Tutte}. First, we note that our chain Tutte polynomials are far from the chain polynomials studied in \cite{RW-99} by Read and Whitehead and \cite{Tral-02} by Traldi. We do not consider weighted graphs or matroids like that of Zaslavsky in \cite{Zas-92}. Farr in \cite{Farr-93} considered a rank generating function for objects with rank functions defined over the entire real field (not just the non-negative integers). Cameron, Dinu, Micha{\l}ek, and Seynnaeve in \cite{CCMN22} defined a Tutte polynomial on a flag matroid which is a sequence of matroids. There are also generalizations to finer invariants. An important example in the multivariable Tutte polynomial (see \cite{Sokal05}) which is a complete invariant for matroids. There is also the work of Bernardi, K\'{a}lm\'{a}n, and Postnikov in \cite{OTP22} where they define one polynomial for each positive integer $n$ that parametrizes the Tutte polynomials of all polymatroids with a fixed ground set size $n$. Another perspective was given by Krajewski, Moffatt, and Tanasa in \cite{KMT-18} where a framework is outlined to build Tutte polynomials using Hopf algebras. Miezaki, Oura, Sakuma, and Shinohara in an announcement paper \cite{MOSS-19} define genus $g$ Tutte polynomials which are probably the most similar to the study here. However, the polynomials of Miezaki, Oura, Sakuma, and Shinohara have many more terms and are much finer invariants than ours. Actually the main result (Theorem 3.1 in \cite{MOSS-19}) of Miezaki, Oura, Sakuma, and Shinohara is that their collection of genus $g$ Tutte polynomials are complete invariants for matroids where ours will never be. The polynomials in \cite{KMT-18} seem close to ours but they are not built from sequences of ground set elements. The polynomials in  \cite{KMT-18} satisfy a classical ``2-term'' deletion-contraction formula instead of our $k$-term version. 

The first goal of this work is to present a generalized recursion for $T_M^k$. In order to state a kind of recursion we first need the following polynomials. These polynomials have three inputs: the matroid, a ground set element, and an integer for where the split occurs. In order to define these polynomials we need to recall some more matroid operations. For a matroid $M=(\A,\I)$ the \emph{deletion} of $M$ by $a\in \A$ is the matroid $M\bs S=(\A \bs a,\I')$ where $\I'=\{I\subseteq \A\bs a \ |\ I\in \I \}$. The \emph{contraction} of $M$ by $a\in \A$ is the matroid $M\bs a=(\A \bs a,\I'')$ where $$\I''=\left\{ \begin{array}{ll}
\{I\subseteq \A\bs S| I\cup a\in \I \} & \text{ if } a \text{ is not a loop}\\
\I & \text{ if } a \text{ is a loop.}
\end{array} \right._.$$ If $S\subseteq \A$ the deletion $M\bs S$ and the contraction $M/S$ are the matroids defined by repeatedly deleting and contracting by elements in $S$ respectively. The rank functions of these successive deletions and contractions satisfy $\rk_{M\bs S}(A)=\rk_M(A)$ and $\rk_{M/S}(A)=\rk_M(A\cup S)-\rk_M(S)$ for $A\subseteq \A\bs S$.

\begin{definition}\label{split-polys} Let $M$ be a matroid with atoms $\A$ and $k\in \Z_{\geq0}$. Then we define the $k^{th}$ \emph{split chain Tutte polynomial} of $M$ and $a\in \A$ \emph{at split term} $j$ where $0\leq j\leq k$ by $sT^{k,0}_{M,a}=T^k_{M/a}$ for the case $j=0$, $sT^{k,k}_{M,a}=T^k_{M\bs a}$ for the case $j=k$, and for $j\in \{1,\ldots ,k-1\}$ 

\begin{align*} sT^{k,j}_{M,a}((x_i),(y_i)) &=\sum\limits_{(S_i)_{1}^{j} \in \C^{j}_{\A -a} } \prod\limits_{i=1}^{j}(x_{i}-1)^{\rk(M\bs a)-\rk_{M\bs a}(S_i)} (y_{i}-1)^{|S_i|-\rk_{M\bs a}(S_i)}\\
&\hspace{1.5cm} \cdot \sum\limits_{\substack{(S_i)_{j+1}^{k} \in \C^{k-j}_{\A -a}\\ S_j\subseteq S_{j+1}} } \prod\limits_{i=j+1}^{k}(x_{i}-1)^{\rk(M/ a)-\rk_{M/ a}(S_i)} (y_{i}-1)^{|S_i|-\rk_{M/ a}(S_i)}.\end{align*} 

\end{definition}

We present another view of $sT^{k,j}_{M,a}((x_i),(y_i))$ in section \ref{def-sec}. We do not know of a chain Tutte polynomial minor decomposition of $sT^{k,j}_{M,a}((x_i),(y_i))$. Now we state the generalized deletion-contraction formula.

\begin{theorem}\label{recursion}

Let $M=(\A,\rk)$ be a matroid and $a\in \A$. 

\begin{enumerate}

\item If $a\in \A$ is a loop then $$T^k_M((x_i);(y_i))=T^k_{L}((x_i);(y_i))T^k_{M\bs a}((x_i);(y_i))$$ where $L$ is the matroid of a loop.

\item If $a\in \A$  is a coloop then $$T^k_M((x_i);(y_i))=T^k_{C}((x_i);(y_i))T^k_{M/ a}((x_i);(y_i))$$ where $C$ is the matroid of a coloop.

\item If $a\in \A$ is not a loop and not a coloop then $$T^k_M((x_i);(y_i))=\sum\limits_{j=0}^k sT^{k,j}_{M,a}((x_i);(y_i)).$$

\end{enumerate}

\end{theorem}

The next main result is relating the chain Tutte polynomials to Derksen's $\G$-invariant. We show that we can ``linearly'' determine Derksen's $\G$-invariant from the $|\A|^{th}$ chain Tutte polynomial. In order to properly define the homomorphism for the next theorem we have to consider the chain Whitney polynomial $W^k_M$ as an element in the infinite direct sum of the multivariable Laurent polynomial ring $\bigoplus_{r,n}\ZZ [a_i^{\pm 1},b_i^{\pm 1}]_{i \in [n]}$.

\begin{theorem}\label{T-universal}

There exists an additive homomorphism \[\Psi: \bigoplus_{r,n}\ZZ [a_i^{\pm 1},b_i^{\pm 1}]_{i \in [n]} \to \mathrm{Qsym}\] such that $\Psi (W_M^{|\A|})=\G (M)$ for any matroid $M$.

\end{theorem}

Since $T^1_M$ is the classic Tutte polynomial and $T^{|\A|}_M$ is essentially equivalent to Derksen's $\G$-invariant, the polynomials $T^k_M$ are a sequence of polynomials between, in terms of successive refinement, the classical Tutte polynomial and Derksen's $\G$-invariant. 

A starting place for this study was the work of Dupont, Fink, and Moci in \cite{DFM-19} where they define universal Tutte characters and their convolutions.  For this study we use the language and perspective in \cite{AS-20} where Ardila and Sanchez make the setting of Tutte characters concrete. In section \ref{val-sec} we define generalized permutahedra and valuations on matroids. Then we define a slightly different chain Tutte polynomial in the setting of generalized permutahedra by convolutions Tutte characters. Applying the tools of Ardila and Sanchez we prove these chain Tutte polynomials are valuations on generalized permutahedra. Then restricting this result to matroids we obtain the following (see section \ref{val-sec} for details).

\begin{theorem}\label{chainTutte-val}

The $k^{th}$ chain Whitney polynomial and chain Tutte polynomial are matroid valuations.

\end{theorem}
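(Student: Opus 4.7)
The plan is to deduce the theorem from Proposition~\ref{uT-val-GP} by realizing $W^k_M$ as a substitution of $uT^{k+1}_{P(M)}$; the statement for $T^k_M = W^k_M((x_i-1);(y_i-1))$ then follows from one further affine substitution. Two preliminary observations do most of the conceptual work. First, since matroid polytopes form a submonoid of $\mathbf{GP}$ and matroid polyhedral subdivisions are in particular generalized-permutahedron subdivisions in the sense of Definition~\ref{subdiv}, the restriction of any $\mathbf{GP}$-valuation to matroid polytopes is automatically a matroid valuation. Second, the inclusion--exclusion identity in Definition~\ref{GP-val} is linear in the value of the invariant, so it is preserved under post-composition with any ring homomorphism.

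The core step is to line up the summation indices. By~(\ref{uchianTutte-mat}), $uT^{k+1}_{P(M)}$ is a sum over chains $\emptyset = A_0 \subseteq A_1 \subseteq \cdots \subseteq A_{k+1} = \A$, and the map $(S_1,\ldots,S_k) \mapsto (\emptyset, S_1, \ldots, S_k, \A)$ is a bijection from $\C^k_\A$ onto this set of chains. On a matched chain, I would rewrite the exponents of $W^k_M$ telescopically as $\rk(M)-\rk(S_i) = \sum_{j=i+1}^{k+1}(\rk(A_j)-\rk(A_{j-1}))$ and $|S_i|-\rk(S_i) = \sum_{j=1}^{i}\bigl((|A_j|-|A_{j-1}|)-(\rk(A_j)-\rk(A_{j-1}))\bigr)$, then swap the outer product over $i$ with the inner sums over $j$. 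Collecting exponents produces $\prod_{j=1}^{k+1} U_j^{|A_j|-|A_{j-1}|}V_j^{\rk(A_j)-\rk(A_{j-1})}$ with $U_j = \prod_{\ell=j}^{k}b_\ell$ and $V_j = \bigl(\prod_{\ell=1}^{j-1}a_\ell\bigr)\bigl(\prod_{\ell=j}^{k}b_\ell\bigr)^{-1}$, which is exactly the $(A_j)$-summand of $uT^{k+1}_{P(M)}$ under the substitution $u_j \mapsto U_j$, $v_j \mapsto V_j$.

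Combining the two observations with this identification, $W^k$ is the composition of the matroid valuation $uT^{k+1}_{P(\cdot)}$ with the substitution ring map $\mathbb{Z}[u_j, v_j] \to \mathbb{Z}[a_i, b_i^{\pm 1}]$, hence is itself a matroid valuation; its image in fact lies in $\mathbb{Z}[a_i, b_i]$ by Definition~\ref{Def-Tutte} (equivalently, since $|S|-\rk(S)$ is monotone along chains, the net exponent of every $b_\ell$ is non-negative). Applying the further ring map $a_i \mapsto x_i-1$, $b_i \mapsto y_i-1$ transfers the same conclusion to $T^k$. The only real chore in this argument is the bookkeeping of the telescoping substitution; no deeper obstruction is expected since the valuation property has already been handled at the universal level on $\mathbf{GP}$.
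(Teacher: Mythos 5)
Your proposal is correct and follows essentially the same route as the paper: both reduce to Proposition \ref{uT-val-GP} by restricting $uT^{k+1}_{P(\cdot)}$ to the submonoid of matroid polytopes and then apply exactly the monomial change of coordinates $u_j\mapsto\prod_{\ell=j}^k b_\ell$, $v_j\mapsto\bigl(\prod_{\ell=1}^{j-1}a_\ell\bigr)\bigl(\prod_{\ell=j}^{k}b_\ell\bigr)^{-1}$ identifying the specialized universal polynomial with $W^k_M$, followed by $a_i\mapsto x_i-1$, $b_i\mapsto y_i-1$ for $T^k_M$. The only difference is presentational: you derive the substitution by telescoping the exponents of $W^k_M$, while the paper posits the substitution and verifies the exponents of each $a_i$ and $b_i$, which is the same computation run in the opposite direction.
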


Since Derksen's $\G$-invariant is valuatively universal, which was proved by Fink and Derksen in \cite{DF09}, we get that the $|\A|^{th}$ chain Tutte polynomial is equivalent to $\G$. Hence we have proved that the chain Tutte polynomials have made a spectrum of finer invariants from the classical Tutte polynomial to the $\G$-invariant.

Another aim of this paper is to study some relatively unknown matroid invariants as evaluations of chain Tutte polynomials. First we look at some constant evaluations of the chain Tutte polynomials. While the constant evaluation formulas we present are not surprising, we include them to illustrate how chain Tutte polynomials generalize classic formulas for the Tutte polynomial. Then we consider some polynomial evaluations of chain Tutte polynomials. The \emph{M\"obius polynomial} of a matroid $M$ is \begin{equation}\label{Mobdef}\bar{\chi}_M(s,t)=\sum\limits_{X\leq Y\in L(M)}\gm (X,Y)s^{\rk(M)-\rk(X)}t^{\rk (M)-\rk(Y)}\end{equation} where $\mu$ is the M\"obius function defined in section \ref{prelim}. It turns out one can obtain the M\"obius polynomial of a matroid from the second chain Tutte polynomial and together with Theorem \ref{chainTutte-val} we get the following.

\begin{theorem}\label{evalT2-mob}

If $M$ is a matroid then $$T^2_{M}(1-s,1-t;0,0)=\bar{\chi}_M(s,t)$$ and $\bar{\chi}_M$ is a matroid valuation.

\end{theorem}

The M\"obius polynomial has seen some recent activity in coding theory (see \cite{JV-21} and \cite{Jur-12}) and some applications in non-commutative algebras defined by layered graphs in \cite{RW-09} by Retakh and Wilson which originated from work of Gelfand, Retakh, Serconek, and Wilson in \cite{GRSW-05}.  Using the generalized deletion-contraction formula applied to the evaluation formulas from chain Tutte polynomials we get a new recursion for the M\"obius polynomial (Corollary \ref{Mob-recur}). 

We conclude our study by applying the chain Tutte polynomial to a few other related polynomials. We study the opposite characteristic polynomial of the lattice of flats of a matroid. This opposite characteristic polynomial is also an easy evaluation of the second chain Tutte polynomial. The reason to study the opposite characteristic polynomial is that in \cite{JW-20} Johnson and the author presented a problem to prove that the generalized M\"obius function defined using the $J$-function from \cite{JW-20} is a valuation. We prove this result by proving that the opposite characteristic polynomial is a valuation since it is an evaluation of the second chain Tutte polynomial. Next we examine the  expected codimension of a matroid variety defined by Ford in \cite{Ford-15}.  We show that the expected codimension is determined by the second chain Tutte polynomial and then apply our inductive result to obtain a recursion for the expected codimension.

We organize this paper as follows. In section \ref{prelim} we collect some classic facts about matroids, generalized permutahedra, M\"obius functions, Tutte polynomials and Derksen's $\G$-invariant. Then in section \ref{def-sec} we present basic results about chain Tutte polynomials, including Theorem \ref{recursion}, which gives a generalized deletion-contraction formula. Next in section \ref{val-sec} we define chain Tutte polynomials on generalized permutahedra and prove that chain Tutte polynomials are valuations. We end with section \ref{evals-sec} where we examine various applications of chain Tutte polynomials by computing specific polynomial evaluations. There we study the M\"obius polynomial, the opposite characteristic polynomial, the generalized $J$-M\"obius polynomial, and the expected codimension of a matroid variety.

\

\noindent {\bf Acknowledgments:} Special thanks go the the referee for many suggestions which greatly improved the article. The author is very thankful to Cl\'ement Dupont for teaching him about universal Tutte characters, Graham Denham for some conversations about Tutte polynomials, Carolyn Chun for insights on matroid theory, Franklin Kenter for some comments in graph theory, Will Traves for many helpful discussions, and Joseph Bonin for helpful comments on matroid theory and earlier versions of this note. The author is very thankful to Alex Fink for many helpful comments and mathematical suggestions for an earlier version of this article. In particular, Fink made the suggestion to find the expected codimension with the chain Tutte polynomials.


\section{Preliminaries}\label{prelim}

In this section we review some definitions and results as well as some reformulations of basic properties of well known invariants on matroids and lattices. For matroid terminology we follow \cite{Ardila-15} and \cite{Oxley}.

\subsection{Posets and polymatroids} In a finite poset $L$ a maximal chain is a strictly ordered sequence of elements $a_1<a_2<\cdots <a_k$ such that there does not exist $b\in L$ and $1\leq i\leq k-1$ with $a_i<b<a_{i+1}$. A finite poset $L$ is a ranked lattice if all least upper bounds and greatest lower bounds exist and the lengths of all maximal chains are the same. If $L$ is a ranked lattice then the least upper bound is called a join and denoted by $\vee$ and the greatest lower bound is called a meet and denoted by $\wedge$. If a finite lattice $L$ is ranked then it has a rank function usually denoted by $\rk : L\to \Z_{\geq 0}$ where $\rk(X)$ is the length of a maximal chain from the least element to $X$. An atom in a lower bounded lattice with bottom element $\hat{0}$ is any element $a\in L$ such that $a>\hat{0}$ and there does not exist $b\in L$ such that $\hat{0}<b<a$. We say a lower bounded lattice is atomic if every element is a join of atoms. Let $L$ be a finite ranked and atomic lattice with atoms $\A$ and rank function $\rk$. If $S\subseteq \A$ then we define the rank of $S$ as $\rk(\bigvee S)$.

A \emph{flat} of a matroid $M$ is a set $F\subseteq \A$ such that for all $x\in \A - F$, $\rk(F\cup \{x\})>\rk(F)$. The \emph{lattice of flats} of $M$, denoted by $L(M)$, is the set of flats of $M$ ordered by inclusion. The lattice of flats is a geometric lattice; in particular it is a ranked atomic lattice. We often work with $M$ through $L(M)$. When $M$ is simple, the atoms of $L(M)$ are the elements of $\A$. In general, the atoms of $L(M)$ are the parallelism classes of non-loops in $\A$.

Note that from an undirected graph $G=(V,E)$ with vertices $V$ and edges $E$ one can construct a matroid $M=(E,\rk)$ where the rank function is defined by $\rk(S)=|S|-c$ where $c$ is the number of connected components of the induced graph on the vertices $S$. We allow graphs to have multiedges and loops. In Section \ref{def-sec} we will consider some examples defined by graphs.

\subsection{M\"obius function} The M\"obius function of a locally finite poset $\calP$ is a function $\gm :\calP \times \calP \to \ZZ$ defined by setting $\gm (X,X)=1$ and $$\sum\limits_{X\leq Y\leq Z}\gm (X,Y)=0$$ for all $X,Z\in \calP$. First we recall a classic result on M\"obius functions on matroids by which we mean the M\"obius function on the lattice of flats $L(M)$ of the matroid $M$. For any subset of atoms $X\subseteq \A$ we write $\bigvee X$ for the join of these atoms inside the lattice of flats (a.k.a. the flat closure of $X$ in $L(M)$).

\begin{lemma}[\cite{Zas87}, Proposition 7.1.4]\label{m1} Let $M$ be a simple matroid with minimum flat $\hat{0}=\emptyset$ and maximum flat $\hat{1}=\A$. Then $$\gm (\hat{0},\hat{1})=\sum\limits_{\substack{X\subseteq \A \\ \bigvee X=\hat{1}}}(-1)^{|X|}.$$
\end{lemma}

For our purposes we need to extend this result slightly. Surely this is in the literature somewhere, but the author could not find this particular formulation so we include the proof.

\begin{lemma}\label{mobius-Whitneythm}

Let $M$ be a matroid with ground set $\A$ and $L(M)$ its lattice of flats. For all $X,Y\in L(M)$, $$\gm(X,Y)=\sum\limits_{\substack{A \subseteq B \subseteq \A\\ \bigvee A=X \\ \bigvee B=Y}}(-1)^{|A|+|B|}.$$

\end{lemma}

\begin{proof}

For $X,Y\in L(M)$ we define a function \[g(X,Y)=\sum\limits_{\substack{A \subseteq B \subseteq \A\\ \bigvee A=X \\ \bigvee B=Y}}(-1)^{|A|+|B|}\] and note here that $A,B\subseteq \A$ are not necessarily flats. Then for any fixed flats $X,Z\in L(M)$
\begin{align*}
\sum\limits_{\substack{Y\in L(M)\\ X\leq Y\leq Z}}g(X,Y)=& \sum\limits_{\substack{Y\in L(M)\\ X\leq Y\leq Z}} \left[ \sum\limits_{\substack{A \subseteq B \subseteq \A\\ \bigvee A=X \\ \bigvee B=Y}}(-1)^{|A|+|B|}\right]\\
=&\sum\limits_{\substack{Y\in L(M)\\ X\leq Y\leq Z}} \left[  \sum\limits_{\substack{A \subseteq \A\\ \bigvee A=X}}(-1)^{|A|}\left[ \sum\limits_{\substack{B \supseteq A \\ \bigvee B=Y}}(-1)^{|B|}\right]\right] \\
=& \sum\limits_{\substack{A \subseteq \A\\ \bigvee A=X}}(-1)^{|A|}\left[ \sum\limits_{\substack{Y\in L(M)\\ X\leq Y\leq Z}} \left[  \sum\limits_{\substack{B \supseteq A \\ \bigvee B=Y}}(-1)^{|B|}\right]\right] \\
=& \sum\limits_{\substack{A \subseteq \A\\ \bigvee A=X}}(-1)^{|A|}\left[ \sum\limits_{\substack{B \subseteq \A \\ A\subseteq B\subseteq Z}}(-1)^{|B|}\right] .\\
\end{align*}
The sum in the brackets in the last line above is zero in all cases except when $A=Z$ which implies that $X=Z$ and the entire expression is $(-1)^{2|Z|}=1$. Hence we have shown that the function $g$ satisfies the same recursion as the M\"obius function $\gm$.\end{proof}

\subsection{Characteristic polynomials and deletion-contraction} The characteristic polynomial has been one of the most studied matroid invariants. This invariant will appear in various places in this work.

\begin{definition}

Let $M=(\A,\rk)$ be a polymatroid and $L(M)$ its lattice of flats. The \emph{characteristic polynomial} of $M$ is $$\chi_M(t)=\sum\limits_{X\in L(M)} \gm (\hat{0},X)t^{\crk (X)}$$ where $\crk (X)=\rk(M)-\rk(X)$ (we sometime use this notation to condense exponents when needed).

\end{definition}

A key fact is that the characteristic polynomial of a matroid is an evaluation of the Tutte polynomial \[\chi_M(t)=(-1)^{\rk(M)}T_M(1-t;0).\] Now we recall the functions that satisfy a deletion-contraction formula.

\begin{definition}\label{gen-Tutte-groth}

Let $R$ be a commutative ring. We say that a function $f: \mathrm{Mat} \to R$ is a \emph{generalized Tutte-Grothendieck invariant} if for any $M\in \mathrm{Mat}$ with ground set $\A$ and $e\in \A$,

$$f(M)=\left\{\begin{array}{lll}
af(M\bs e)+bf(M/e) &\ \ &\text{if }e\text{ is neither a loop nor a coloop}\\
f(M\bs e)f(L) &&\text{if } e \text{ is a loop}\\
f(M/ e)f(C) &&\text{if } e \text{ is a coloop}\\
\end{array}\right.$$ for some fixed non-zero constants $a,b\in R$ where $C$ is the matroid of one coloop and $L$ is a matroid consisting of one loop.

\end{definition}

\section{Basic properties of chain Tutte polynomials}\label{def-sec}

For the majority of this note we focus primarily on the chain generalization of the classical Tutte polynomial given in Definition \ref{Def-Tutte}. In section \ref{val-sec} we also study an equivalent version (a change of coordinates) in Definition \ref{GP-uniTutte-def}.

For most of this study we will focus on the setting of matroids. However, we may examine the case of a more general ranked, atomic lattice $L$ in which case we write $T_L^k((x_i);(y_i))$ where $\rk (S_i)$ means the rank of $\rk(\bigvee S_i)$ in $L$. In this case the atoms are the ground set elements and for a subset of ground set elements $S_i$ the rank $\rk(S_i)$ is the usual matroid rank function.

\subsection{Formulas for $T^k$ in terms of $T^{|\A|}$} As desired, the higher chain Tutte polynomials determine all the lower chain Tutte polynomials.

\begin{lemma}\label{getT1}

For any matroid $M$ and any $k\geq 1$ $$T^{k+1}_M(2,2x_1-1,x_2,\dots ,x_k;2,2^{-1}y_1+2^{-1},y_2,\dots ,y_k)=2^{\rk (M)}T^k_M(x_1,\dots ,x_k;y_1,\dots ,y_k).$$

\end{lemma}

\begin{proof} Computing the evaluation we get

$$T^{k+1}_M(2,2x_1-1,x_2,\dots ,x_k;2,2^{-1}y_1+2^{-1},y_2,\dots ,y_k)$$

$$=\sum\limits_{(S_i)_1^{k+1} \in \C^{k+1}_\A } 2^{\rk(M)-|S_2|}\prod\limits_{i=1}^k(x_i-1)^{\rk (M)-\rk (S_{i+1})} (y_i-1)^{|S_{i+1}|-\rk(S_{i+1})}$$ 

$$=2^{\rk(M)}\sum\limits_{(S_i)_2^{k+1} \in \C^{k}_\A } \prod\limits_{i=1}^k(x_i-1)^{\rk (M)-\rk (S_{i+1})} (y_i-1)^{|S_{i+1}|-\rk(S_{i+1})}$$ since the number of subsets $S_1$ of $S_2$ is exactly $2^{|S_2|}$. By noting the last quantity above is just a reindexing of $2^{\rk(M)}T^k_M(x_1,\dots ,x_k;y_1,\dots ,y_k)$ we have completed the proof. \end{proof}

Hence the classic Tutte polynomial $T^1$ is determined by any higher chain Tutte polynomial $T^k$ for $k\geq 2$. Moreover, we have the following which gives that the $|\A|^{th}$ chain Tutte polynomial $T^{|\A|}_M$ for $M$ determines all the other chain Tutte polynomials $T^k_M$. 

\begin{proposition}\label{Adeter} 

For all $k\geq 1$ there is a homomorphism $\gs_k: \ZZ [x_1,\dots ,x_{|\A|},y_1,\dots , y_{|\A|}] \to \ZZ [x_1,\dots ,x_k,y_1,\dots ,y_k]$ such that $\gs_k(T^{|\A|}_M)=T^k_M$ for all matroids $M$.

\end{proposition}

\begin{proof}

For $k\leq |\A|$ Lemma \ref{getT1} gives the result. For $k>|\A |$ any term of $T^k_M$ will have at least $k-|\A|$ of the same consecutive exponents on both the $x_i$ and the $y_i$ of the form $x_i^ax_{i+1}^a$ and $y_i^cy_{i+1}^c$. Let $D_k=\{(b_1,\dots ,b_{|\A|})|b_i\in \NN, \ b_1+\cdots +b_{|\A|}=k-|\A|\}$ be the set of distributions for determining new terms from $T^{|\A|}_M$ to $T^k_M$. Also, define $B_1=0$ and for any $2\leq j\leq |\A|$ set $B_j=b_1+b_2+\cdots +b_{j-1}$. Denote each term of $T^{|\A|}_M$ determined by a chain $(S_i)_1^{|\A|}$ by \[tm(S_i)_1^{|\A|}:=\prod\limits_{i=1}^{|\A|} (x_i-1)^{\rk(M)-\rk(S_i)}(y_i-1)^{|S_i|-\rk(S_i)} .\] Then we define \[ \gs_k(tm(S_i)_1^{|\A|})=\sum\limits_{(b_1,\dots , b_{|\A|})\in D_k} \prod\limits_{i=1}^{|\A|} \prod\limits_{m=0}^{b_i}(x_{i+B_j+m}-1)^{\rk(M)-\rk(S_i)}(y_{i+B_j+m}-1)^{|S_i|-\rk(S_i)} .\] Then extending $\gs_k$ linearly and applying to all the terms of $T^{|\A|}_M$ we get $\gs_k (T^{|\A|}_M) =T^k_M$. \end{proof}


\subsection{Derksen's $G$-invariant}\label{G-inv-sub}

In \cite{DF09} Derksen and Fink proved that the $G$-invariant is universal among valuative invariants.

\begin{theorem}[{\cite[Theorem 1.4]{DF09}}]\label{DF-universal}

The coefficients of $\G$ span the vector space of all valuative matroid invariants with values in $\mathbb{Q}$.
\end{theorem}

%
%
%
We juxtapose this with a formula for determining Derksen's $\G$-invariant in terms of $T^{|\A|}$.

\begin{proof}[Proof of Theorem \ref{T-universal}]

First we fix the rank to be $r\geq 0$ and the number of ground set elements to be $n=|\A|$. Then we consider the function \[\psi_{r,n} : \ZZ [a_1^{\pm 1},\ldots ,a_{n}^{\pm 1},b_1^{\pm 1},\ldots b_{n}^{\pm 1}] \to  \ZZ [a_1^{\pm 1},\ldots ,a_{n}^{\pm 1},b_1^{\pm 1},\ldots b_{n}^{\pm 1}]\] defined by $\psi_{r,n}(1)=1$,

\[ a_i^{\pm 1} \mapsto \left\{ \begin{array}{cc}
(a_i^{-1}a_{i+1}b_i^{-1})^{\pm 1} & \text { if } 1\leq i<n\\
(a_{n}^{-1}b_n)^{\pm 1} & \text { if }  i=n\\
\end{array} \right. \]
and \[ b_i^{\pm 1}\mapsto b_i^{\pm 1}\]
for all $1\leq i\leq n$ and extend linearly. Also, define \[s_{r,n} : \ZZ [a_1^{\pm 1},\ldots ,a_{n}^{\pm 1},b_1^{\pm 1},\ldots b_{n}^{\pm 1}] \to  \ZZ [a_1^{\pm 1},\ldots ,a_{n}^{\pm 1},b_1^{\pm 1},\ldots b_{n}^{\pm 1}]\] by \[ s_{r,n}(p)= p\cdot  a_1^{r}\prod\limits_{i=1}^nb_i^r.\] 

Then composing these two maps we get \[ (s_{r,n} \circ \psi_{r,n})(W_M^n)= \sum\limits_{(S_i)\in \C^n_{\A}} \prod\limits_{i=1}^n a_i^{\rk(S_i)-\rk(S_{i-1})}b_i^{|S_i|} .\]

Consider the decomposition $\mathrm{QSym}=\bigoplus_{r,n}\mathrm{QSym}_{r,n}$ where $\mathrm{QSym}_{r,n}$ are the quasi-symmetric functions spanned by $U_s$ where $s$ has length $n$ and the sum of the elements in $s$ is $r$. With this decomposition we can define our final map \[ \theta_{r,n} : \ZZ [a_1^{\pm 1},\ldots ,a_{n}^{\pm 1},b_1^{\pm 1},\ldots b_{n}^{\pm 1}] \to \mathrm{QSym}_{r,n}\] on monomials by \[ \prod\limits_{i=1}^na_i^{e_i}b_i^{f_i}\mapsto \left\{ \begin{array}{cc} U_{(e_1,\ldots , e_n)}& \text{ if } \forall i, \ f_i=i,\ \text{ and } \sum e_i=r\\
0 & \text{ else}\\
\end{array}\right. .\]
By construction for any polymatroid $M\in \mathrm{Mat}_{r,n}$ we have $(\theta_{r,n}\circ s_{r,n}\circ \psi_{r,n})(W_M^n)=\G (M).$ To finish the proof we consider the extension \[ \Psi =\oplus_{r,n} (\theta_{r,n}\circ s_{r,n}\circ \psi_{r,n})\] on all of \[ \bigoplus_{r,n}  \ZZ [a_1^{\pm 1},\ldots ,a_{n}^{\pm 1},b_1^{\pm 1},\ldots b_{n}^{\pm 1}] \to \mathrm{QSym}\] to get the desired result that $\Psi(W_M^n)=\G(M)$ for any polymatroid.  \end{proof}

\subsection{Direct sums and duals}

Next we present a few basic facts about these polynomials which generalize the properties of the Tutte polynomial $T^1_M$. First we show that up to a change of coordinates a matroid and its dual have the same the chain Tutte polynomial.

\begin{proposition}\label{Tutte-dual}

If $M^*$ is the dual matroid of $M$ then $$T^k_{M^*}(x_1,\dots ,x_k;y_1,\dots , y_k)=T^k_M(y_k,\dots ,y_1;x_k,\dots ,x_1).$$

\end{proposition}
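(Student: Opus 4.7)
The plan is to mimic the classical duality argument $T_{M^*}(x,y) = T_M(y,x)$, but carefully track what complementation does to a chain of nested subsets. The core identity is that for any $S \subseteq \A$,
\[
\rk_{M^*}(S) = |S| - \rk_M(\A) + \rk_M(\A \setminus S),
\]
from which one easily computes
\[
\rk_{M^*}(M^*) - \rk_{M^*}(S) = |\A \setminus S| - \rk_M(\A \setminus S), \qquad |S| - \rk_{M^*}(S) = \rk_M(M) - \rk_M(\A \setminus S).
\]
So the classical identity is driven by the substitution $S \leftrightarrow \A \setminus S$, which swaps the two exponents.

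First, I would set up the corresponding bijection on chains. For a chain $S_1 \subseteq \cdots \subseteq S_k$ in $2^\A$, define $T_i := \A \setminus S_{k-i+1}$. Complementation reverses inclusions and re-indexing by $i \mapsto k-i+1$ reverses them again, so $T_1 \subseteq T_2 \subseteq \cdots \subseteq T_k$. This is a bijection between $\C^k_\A$ and itself, and it is the appropriate analogue of the single-set complement in the classical $k=1$ case.

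Next, I would substitute into the Whitney sum defining $W^k_{M^*}$ and apply the two rank identities above, writing the $i$-th factor in terms of $T_{k-i+1}$:
\[
\prod_{i=1}^{k} a_i^{\,\rk_{M^*}(M^*)-\rk_{M^*}(S_i)} b_i^{\,|S_i|-\rk_{M^*}(S_i)}
= \prod_{i=1}^{k} a_i^{\,|T_{k-i+1}|-\rk_M(T_{k-i+1})} b_i^{\,\rk_M(M)-\rk_M(T_{k-i+1})}.
\]
Re-indexing the product by $j = k-i+1$ matches factor $j$ with the variables $a_{k-j+1}$ and $b_{k-j+1}$, which yields
\[
W^k_{M^*}\bigl((a_i);(b_i)\bigr) = W^k_M\bigl((b_k,\ldots,b_1);(a_k,\ldots,a_1)\bigr).
\]
Finally, the Tutte version follows by the substitution $a_i = x_i-1$, $b_i = y_i-1$ from Definition \ref{Def-Tutte}, giving exactly $T^k_{M^*}(x_1,\dots,x_k;y_1,\dots,y_k) = T^k_M(y_k,\dots,y_1;x_k,\dots,x_1)$.

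The only delicate step is the combinatorics of the index reversal: because complementation reverses inclusions, the chain in $M$ must be indexed in the opposite order from the chain in $M^*$, and the reader (and the bookkeeping) must be careful that this reversal is exactly what produces the reversed variable lists $(y_k,\dots,y_1)$ and $(x_k,\dots,x_1)$ rather than, say, $(y_1,\dots,y_k)$ swapped in place. Once the bijection $(S_i) \leftrightarrow (T_i)$ with $T_i = \A \setminus S_{k-i+1}$ is fixed, the remaining arithmetic is routine.
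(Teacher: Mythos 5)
Your proof is correct and follows essentially the same route as the paper's: the paper's (much terser) argument uses exactly your bijection $(S_1,\dots,S_k)\mapsto(\A\setminus S_k,\dots,\A\setminus S_1)$ on $\C^k_\A$ together with the identity $\rk_{M^*}(X)=|X|-\rk_M(\A)+\rk_M(\A\setminus X)$. You have simply carried out in full the exponent bookkeeping and index reversal that the paper leaves to the reader.
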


\begin{proof}

First we note that the map sending a flag $(A_1,\dots ,A_k)$ to its complement $(\A-A_k,\dots,\A-A_1)$ is a permutation of $\C^k(\A)$. Then just use Definition \ref{Def-Tutte} and the fact that $\rk_{M^*}(X)=|X|-\rk_{M}(\A)+\rk_{M}(\A-X)$.\end{proof}

The next fact is that $T^k$ is multiplicative over matroid direct sum. Recall that if $M_1=(\A_1,\I_1)$ and $M_2=(\A_2,\I_2)$ are matroids then $M_1\oplus M_2=(\A_1 \sqcup \A_2,\{I_1\sqcup I_2\ |\ I_1\in \I_1,I_2\in \I_2\})$. 

\begin{proposition}\label{Tprod}

If $M_1=(\A_1,\I_1)$ and $M_2=(\A_2,\I_2)$ are matroids then $$T^k_{M_1\oplus M_2}((x_i);(y_i))=T^k_{M_1}((x_i);(y_i))\cdot T^k_{M_2}((x_i);(y_i)).$$

\end{proposition}

\begin{proof}

The proof is done by decomposing the definition of $T^k_{M_1\oplus M_2}((x_i);(y_i))$ in terms of the disjoint sets $\A_1$ and $\A_2$. We find that 

\

$T^k_{M_1\oplus M_2}((x_i);(y_i))=$
\begin{align*}
=& \sum\limits_{(S_i) \in \C^k_\A } \prod\limits_{i=1}^k(x_i-1)^{\rk (\A_1\sqcup \A_2)-\rk (S_i)} (y_i-1)^{|S_i|-\rk(S_i)}\\
=&\sum\limits_{(S_i^1) \in \C^k_{\A_1} } \sum\limits_{(S_i^2) \in \C^k_{\A_2} }\prod\limits_{i=1}^k(x_i-1)^{\rk (\A_1\sqcup \A_2)-\rk (S_i^1\sqcup S_i^2)} (y_i-1)^{|S_i^1\sqcup S_i^2|-\rk(S_i^1\sqcup S_i^2)}\\
=&\sum\limits_{(S_i^1) \in \C^k_{\A_1} } \sum\limits_{(S_i^2) \in \C^k_{\A_2} }\prod\limits_{i=1}^k(x_i-1)^{\rk (\A_1)+\rk(\A_2)-\rk (S_i^1)-\rk( S_i^2)} (y_i-1)^{|S_i^1|+|S_i^2|-\rk(S_i^1)-\rk( S_i^2)}\\
=&\sum\limits_{(S_i^1) \in \C^k_{\A_1} } \prod\limits_{i=1}^k(x_i-1)^{\rk (\A_1)-\rk (4S_i^1)} (y_i-1)^{|S_i^1|-\rk(S_i^1)}\\
& \hspace{1cm} \cdot \sum\limits_{(S_i^2) \in \C^k_{\A_2} }\prod\limits_{i=1}^k(x_i-1)^{\rk(\A_2)-\rk( S_i^2)}(y_i-1)^{|S_i^2|-\rk( S_i^2)}\\
\end{align*} which is exactly the product $T^k_{M_1}((x_i);(y_i))\cdot T^k_{M_2}((x_i);(y_i)).$ \end{proof}

\subsection{Basic Examples} Through some examples we see that chain Tutte polynomials can have negative coefficients, but have more information than the classic Tutte polynomial.

\begin{example}\label{boolean-ex} 

Let $\B_1$ be the rank 1 Boolean lattice (subset lattice of a set with 1 element) also known as the free matroid on one element or $U_{1,1}$ or the matroid of one coloop $C$. Then by direct computation $$T^k_{\B_1}((x_i);(y_i))=1+\sum\limits_{i=1}^k \prod\limits_{j=1}^{i} (x_j-1).$$ Notice that even $T^2_{\B_1}(x_1,x_2;y_1,y_2)$ has negative coefficients. Then by Theorem \ref{Tprod} we have that 
$$T^k_{\B_n}((x_i);(y_i))=\left(1+\sum\limits_{i=1}^k \prod\limits_{j=1}^{i} (x_j-1)\right)^n$$ where $\B_n$ is the free matroid on $n$ elements (aka Boolean subset lattice or the uniform matroid $U_{n,n}$).\end{example}

\begin{figure}
\begin{tikzpicture}
\draw[thick] (0,0)--(2,0)--(2,2)--(0,0)--(0,2)--(2,2);
\draw[thick] (1,1)--(0,2);
\draw[thick] (2,2)--(1,3);

\draw[thick] (0,2) to[bend right] (1,3);
\draw[thick] (0,2) to[bend left] (1,3);

\filldraw[black] (0,0) circle (3pt);
\filldraw[black] (1,1) circle (3pt);
\filldraw[black] (1,3) circle (3pt);

\filldraw[black] (0,2) circle (3pt);

\filldraw[black] (2,0) circle (3pt);

\filldraw[black] (2,2) circle (3pt);

\node at (-1.1,1) {$G_1$};


\draw[thick] (4,0)--(6,0)--(6,2)--(4,2)--(4,0)--(4,2)--(6,2);
\draw[thick] (4,2)--(5,3)--(6,2);

\draw[thick] (4,0)--(5,1)--(6,0);

\draw[thick] (4,2) to[bend right] (5,1);
\draw[thick] (4,2) to[bend left] (5,1);

\filldraw[black] (4,0) circle (3pt);

\filldraw[black] (4,2) circle (3pt);

\filldraw[black] (6,0) circle (3pt);

\filldraw[black] (6,2) circle (3pt);

\filldraw[black] (5,3) circle (3pt);

\filldraw[black] (5,1) circle (3pt);

\node at (7.1,1) {$G_2$};

\end{tikzpicture}
\caption{The Gray graphs}\label{fig2gray}
\end{figure}
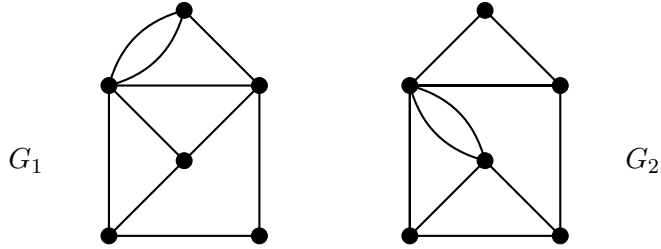

Now we show that the second chain Tutte polynomial $T^2$ has more information than the Tutte polynomial $T^1$ for a few classic examples.

\begin{example}

The two Gray graphs $G_1$ and $G_2$, pictured in Figure \ref{fig2gray} (see \cite[Example 3.4]{D08} and \cite{Bryl-81}), have the same Tutte polynomial even though they are not isomorphic graphs. However, the second chain Tutte polynomials differ for $G_1$ and $G_2$. For example, the coefficient of $y_1y_2^3$ in $T^2_{G_1}$ is 1 but in  $T^2_{G_1}$ it is 0. These polynomials both have 312 non-zero terms hence we do not list all coefficients (these computations were made using Sage \cite{sage}). 

\end{example}

\begin{example}

\begin{figure}
\begin{tikzpicture}
\draw[thick] (0,0)--(2,0)--(0,1)--(0,2)--(1,0)--(0,0)--(0,1);

\filldraw[black] (-.07,.07) circle (3pt);

\filldraw[black] (.07,-.07) circle (3pt);

\filldraw[black] (0,1) circle (3pt);

\filldraw[black] (0,2) circle (3pt);

\filldraw[black] (1,0) circle (3pt);

\filldraw[black] (2,0) circle (3pt);

\filldraw[black] (.66,.66) circle (3pt);

\node at (-.7,1) {$M_1$};


\draw[thick] (4,2)--(4,0)--(7,0);

\filldraw[black] (4,1) circle (3pt);

\filldraw[black] (4.1,2) circle (3pt);

\filldraw[black] (3.9,2) circle (3pt);

\filldraw[black] (5,0) circle (3pt);

\filldraw[black] (7,0) circle (3pt);

\filldraw[black] (4,0) circle (3pt);

\filldraw[black] (6,0) circle (3pt);

\node at (7.8,1) {$M_2$};

\end{tikzpicture}
\caption{Two non-isomorphic matroids with same Tutte polynomial $T^1$ but different chain Tutte polynomial $T^2$}\label{fig2mat}
\end{figure}
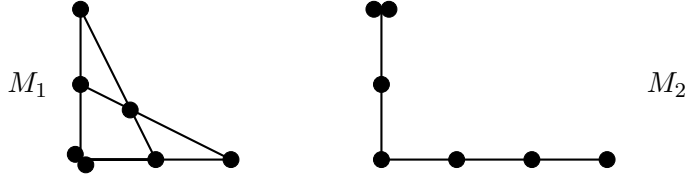

Let $M_1$ and $M_2$ be the rank 3 matroids on 7 elements in Figure \ref{fig2mat} (\cite[Example 6.2.18]{BO92}). The coefficient of $a_1^2a_2b_1b_2^2$ in $W^2_{M_1}$ is 2, but the coefficient of the same monomial in $W^2_{M_2}$ is 1. However, the classic Tutte polynomial of these two matroids is the same. The Derksen invariant $\G$ also distinguishes these two matroids (\cite[Example 3.6]{D08}).

\end{example}

We will see below in Section \ref{val-subsec} that Derksen's $\G$-invariant determines $T^k$ for any $k>0$. Hence, also the chain Tutte polynomials $T^k$ for all $k$ will not distinguish the examples built by Bonin in \cite{Bon-22} and Bonin and Long in \cite{BL-22}. 

\subsection{Recursion for chain Tutte polynomials}

First we look at a few examples to show that $T^k$ for $k>1$ is not a generalized Tutte-Grothendieck invariant.

\begin{example}\label{not-Tutte-inv} For $r,n\in \NN$ and $r\leq n$ let $U_{r,n}$ be the matroid with ground set $[n]$ and independent sets are all subsets of size less than or equal to $r$ of $[n]$; $U_{r,n}$ are called uniform matroids. In order to show a deletion-contraction recursion does not exist we study the matroids $U_{2,4}$, $U_{2,3}$, and $U_{1,2}$. The deletions and contractions independently of the choice of ground set element $e$ of these matroids are as follows: $U_{2,4}\bs e=U_{2,3}$, $U_{2,4}/e=U_{1,3}$, $U_{2,3}\bs e=U_{2,2}$, $U_{2,3}/ e=U_{1,2}$, $U_{1,2}\bs e=U_{1,1}$ and $U_{1,2}/e=U_{0,1}$. We can compute $T^2_{U_{0,1}}=(x_1-1)(x_2-1)+(x_1-1)(x_2-1)(y_2-1)+(x_1-1)(x_2-1)(y_1-1)(y_2-1)$, $T^2_{U_{1,1}}=1+ (x_1-1)(x_2-1)+(x_1-1)$, and $$T^2_{U_{1,2}}=(x_1-1)(x_2-1)+2(x_1-1)+(x_1-1)(y_2-1)+2(y_2-1)+(y_1-1)(y_2-1)+2.$$ Then using the computer algebra system Sage \cite{sage} we get $$T^2_{U_{2,3}}=x_1^2x_2^2 + x_1^2x_2 - 2x_1x_2^2 + x_1^2y_2 + x_1x_2 + x_2^2 + x_1y_2 + y_1y_2 - 2x_2 - y_1 + 1$$ and $$T^2_{U_{2,4}}=x_1^2x_2^2 + x_1^2y_2^2 + y_1^2y_2^2 + 2x_1^2x_2 - 2x_1x_2^2 + 2x_1^2y_2 - 2y_1^2y_2 + 2x_1y_2^2 + 2y_1y_2^2 + x_2^2 + y_1^2 - 2x_2 - 2y_1 + 1.$$

If there was a deletion-contraction recursion from Definition \ref{gen-Tutte-groth} then we would have the following equations $T^2_{U_{1,2}}=aT^2_{U_{1,1}}+bT^2_{U_{0,1}}$, $T^2_{U_{2,3}}=aT^2_{U_{2,2}}+bT^2_{U_{1,2}}$, and $T^2_{U_{2,4}}=aT^2_{U_{2,3}}+bT^2_{U_{1,3}}.$ Hence, elements in the kernel of the matrix \[A=\left[\begin{array}{ccc}
T^2_{U_{1,1}}&T^2_{U_{0,1}}&T^2_{U_{1,2}}\\
T^2_{U_{2,2}}&T^2_{U_{1,2}}&T^2_{U_{2,3}}\\
T^2_{U_{2,3}}&T^2_{U_{1,3}}&T^2_{U_{2,4}}\\
\end{array}\right] \] over the field of rational functions would be a candidates for $a$ and $b$. However, the determinant of $A$ is a non-zero polynomial of degree 10 (again computed using Sage). Hence it's not possible for $T^2$ to be a generalized Tutte-Grothendieck invariant.  Since $T^2$ is an evaluation of $T^k$ for all $k>1$ we know that these are also not generalized Tutte-Grothendieck invariants.

\end{example}

Next we study the split chain Tutte polynomials in Definition \ref{split-polys}. Note that these split chain Tutte polynomials are just reorderings of the original chain Tutte polynomials separated by terms containing or not containing the element $a\in \A$. Gathering terms in order to present a formula for the split chain Tutte polynomials we have 

\[sT^{k,j}_{M,a}((x_i),(y_i))=\] \[\sum\limits_{(S_i)_{1}^{k-j} \in \C^{k-j}_{\A -a} }T^j_{M|{S_{1}}}\prod\limits_{i=1}^{k-j}(x_{i+j}-1)^{\rk(M/a)-\rk_{M/a}(S_i)} (y_{i+j}-1)^{|S_i|-\rk_{M/a}(S_i)}\prod\limits_{i=1}^j(x_i-1)^{\rk (M/S_{1})} .\]



By construction for $a\in \A$ not a loop and not a coloop we can state the classical recursion for the Tutte polynomial as $$T^1_M=sT^{1,0}_{M,a}+sT^{1,1}_{M,a}.$$ Now Theorem \ref{recursion} generalizes this for $k>1$ which we now prove.

\begin{proof}[Proof of Theorem \ref{recursion}]

We split the terms of $T^k_M$ up by which sets contain $a\in \A$. For $j\in \{1,\ldots ,k-1\}$ let $$st_{M,a}^{k,0}((x_i);(y_i))=\sum\limits_{\substack{(S_i)_{1}^k \in \C^{k}_{\A}\\ a\in S_{1}} } \prod\limits_{i=1}^k(x_i-1)^{\rk (M)-\rk (S_i)} (y_i-1)^{|S_i|-\rk(S_i)},$$ $$st_{M,a}^{k,k}((x_i);(y_i))=\sum\limits_{(S_i)_{1}^k \in \C^{k}_{\A- a} } \prod\limits_{i=1}^k(x_i-1)^{\rk (M)-\rk (S_i)} (y_i-1)^{|S_i|-\rk(S_i)},$$ and $$st_{M,a}^{k,j}((x_i);(y_i))= \sum\limits_{\substack{(S_i)_{j+1}^k \in \C^{k-j}_{\A}\\ a\in S_{j+1}} }\sum\limits_{(S_i)_1^j \in \C^j_{S_{j+1}- a} }  \prod\limits_{i=1}^k(x_i-1)^{\rk (M)-\rk (S_i)} (y_i-1)^{|S_i|-\rk(S_i)}. $$ Then by construction $$T_M^k=\sum\limits_{j=0}^k st_{M,a}^{k,j}((x_i);(y_i)).$$ 

We know that for  all $S\subseteq \A$ with $a\in S$ we have $\rk_M(S)= \rk_{M/a}(S-a) +\rk_M(a)$. Since $a\in \A$ is not a loop for all $S\subseteq \A$ with $a\in S$ we have $|S|-\rk(S)=|S-a|+1-(\rk_{M/a}(S-a)+\rk_M(a)) =|S-a|-\rk_{M/a}(S-a)$. Also, for $S\subseteq \A$ with $a\in S$ we have $\rk(M)-\rk_M(S)=(\rk_{M/a}(\A-a) +\rk_M(a))-(\rk_{M/a}(S-a)+\rk_M(a))=\rk(M/a)-\rk_{M/a}(S-a)$. Putting this into the $j=0$ term we have $$st_{M,a}^{k,0}((x_i);(y_i))=\sum\limits_{\substack{(S_i)_{1}^k \in \C^{k}_{\A}\\ a\in S_{1}} } \prod\limits_{i=1}^k(x_i-1)^{\rk(M/a)-\rk_{M/a}(S_i-a)} (y_i-1)^{|S_i-a|-\rk_{M/a}(S_i-a)}$$ $$=\sum\limits_{(S_i)_{1}^k \in \C^{k}_{\A-a}} \prod\limits_{i=1}^k(x_i-1)^{\rk(M/a)-\rk_{M/a}(S_i)} (y_i-1)^{|S_i|-\rk_{M/a}(S_i)}=T^k_{M/a}=sT^{k,0}_{M,a}.$$

In the $j=k$ case the argument is much simpler. Since $a\in A$ is not a coloop we know that $rk(M)=\rk(M\bs a)$. So, $$st^{k,k}_{M,a}=\sum\limits_{(S_i)_{1}^k \in \C^{k}_{\A- a} } \prod\limits_{i=1}^k(x_i-1)^{\rk (M\bs a)-\rk_{M\bs a} (S_i)} (y_i-1)^{|S_i|-\rk_{M\bs a}(S_i)}=T^k_{M\bs a}=sT^{k,k}_{M,a}.$$

Now for any $j\in \{1,\ldots ,k-1\}$ we see that $$st_{M,a}^{k,j}((x_i);(y_i))=\sum\limits_{\substack{(S_i)_{j+1}^k \in \C^{k-j}_{\A}\\ a\in S_{j+1}} }\prod\limits_{i=j+1}^k(x_i-1)^{\rk (M)-\rk (S_i)} (y_i-1)^{|S_i|-\rk(S_i)} \phi_j $$ where $$\phi_j=\sum\limits_{(S_i)_1^j \in \C^j_{S_{j+1}- a} }  \prod\limits_{i=1}^j(x_i-1)^{\rk (M)-\rk (S_i)} (y_i-1)^{|S_i|-\rk(S_i)}.$$ Notice that $\rk(M)=\rk_{M\bs [\A-(S_{j+1}-a)]}(S_{j+1}-a) +\rk_{M/(S_{j+1}-a)}(\A-(S_{j+1}-a))$. Using this we see that $$\phi_j=\left[\prod\limits_{i=1}^j(x_i-1)^{\rk (M/(S_{j+1}-a))}\right]T^j_{M\bs [\A-(S_{j+1}-a)]}.$$ Then we note that for $a\in S_{j+1}\subseteq \cdots \subseteq S_k$ we know that $\rk(M)-\rk(S_i)=\rk(M/a)-\rk_{M/a}(S_i-a)$ and that $|S_i|-\rk_M(S_i)=|S_i-a|-\rk_{M/a}(S_i-a)$ since $a$ is not a loop. Hence $$st^{k,j}_{M,a}=\sum\limits_{\substack{(S_i)_{j+1}^k \in \C^{k-j}_{\A}\\ a\in S_{j+1}} }\prod\limits_{i=j+1}^k(x_i-1)^{\rk(M/a)-\rk_{M/a}(S_i-a)} (y_i-1)^{|S_i-a|-\rk_{M/a}(S_i-a)} \phi_j .$$ Now that each term has $a$ taken out we reindex and write this as $$st^{k,j}_{M,a}=\sum\limits_{(S_i)_{1}^{k-j} \in \C^{k-j}_{\A -a} }\prod\limits_{i=1}^{k-j}(x_{i+j}-1)^{\rk(M/a)-\rk_{M/a}(S_i)} (y_{i+j}-1)^{|S_i|-\rk_{M/a}(S_i)} \phi_j' $$ where $$\phi_j'=\left[\prod\limits_{i=1}^j(x_i-1)^{\rk (M/S_{1})}\right]T^j_{M\bs [\A-S_{1}]}.$$ Since this is exactly the definition of our split Tutte polynomials we are done. Now the formulas for the cases where $a\in \A$ is a loop or coloop is much simpler. Also, the proofs are a direct application of Proposition \ref{Tprod}. \end{proof}

\section{Generalized permutahedra and Valuations}\label{val-sec}

We start with the definition of generalized permutahedra where we follow Ardila-Sanchez \cite{AS-20}. We will study a convolution product definition of a generalized a version of chain Tutte polynomials to the setting of generalized permutahedra. Then we will restrict this new definition to the submonoid of matroids in order to show how chain Tutte polynomials can be derived from these convolution products. 

\subsection{Generalized permutahedra} First we note that a function $f:2^\A\to \R$ is \emph{submodular} if for all $A,B\subseteq \A$ the function $f$ satisfies $f(A)+f(B)\geq f(A\cup B)+f(A\cap B)$. 

\begin{definition}

A \emph{generalized permutahedron} is a polytope $P$ in $\R^\A$ which is of the form $$P=\{x\in \R^\A\ |\ \sum\limits_{i\in \A} x_i=z(\A) \text{ and }  \sum\limits_{i\in S} x_i\leq z(S)\text{ for all } S\subseteq \A \}$$ where $z:2^\A \to \R$ is a submodular function. We denote the set species of generalized permutahedra by ${\bf GP}$ (meaning for any finite set $I$, ${\bf GP}[I]$ is a vector space spanned by the set of all generalized permutahedra in $\R^I$) which has coproduct $\Delta_{S_1,\dots ,S_k}$ defined by $\Delta_{S_1,\dots ,S_k}(P)=(P_1,\dots ,P_k)$ where $P_i$ are generalized permutahedra in $\R^{S_i}$ such that the maximal face of $P$ with respect to the indicator vector $e_{(S_1,\dots ,S_k)}=e_{S_1}+e_{S_1\sqcup S_2}+\cdots +e_{S_1\sqcup \cdots \sqcup S_k}$ is $P_{e_{(S_1,\dots ,S_k)}}=P_1\times \cdots \times P_k$ and (see \cite[Proposition 1.4.4]{AA-17} for details).

\end{definition}

Next we need to consider how polytopes decompose into pieces. The idea is to construct invariants on polytopes inductively by their pieces.

\begin{definition}\label{subdiv}

A generalized permutahedra \emph{subdivision} of a polytope $P$ is a set of generalized permutahedra $\{P_1,\dots ,P_s\}$ whose vertices are vertices of $P$, $$P=\bigcup_{i=1}^sP_i$$ and for all $1\leq i<j\leq s$ if $P_i\cap P_j\neq \emptyset $ then $P_i\cap P_j$ is a proper face of both $P_i$ and $P_j$.  

\end{definition}

Now we can state the definition of a valuation using subdivisions, which is the main subject of the section.

\begin{definition}\label{GP-val}

A function $f:{\bf GP} \to R$ where $R$ is an algebra is a \emph{valuation} if for any $P\in {\bf GP}$ and any subdivision $\{P_1,\dots ,P_k\}$ of $P$ we have $f(\emptyset )=0$ and $$f(P)=\sum\limits_{\emptyset \neq \{j_1,\dots ,j_i\}\subseteq [k]}\hspace{-.5cm}(-1)^{i}f(P_{j_1}\cap \cdots \cap P_{j_i}).$$

\end{definition}

\begin{remark}

In the literature there are many studies on different classes of combinatorial objects which properly contain all generalized permutahedra as well as many different notions of valuative functions (see \cite[Appendix A]{EHL-23} for an excellent summary). In this paper we restrict to ${\bf GP}$ and the definition of a valuation in Definition \ref{GP-val}. 

\end{remark}

Next we focus on some polynomials which are slightly different than those defined in Definition \ref{Def-Tutte}. To do this, again we follow Ardila and Sanchez \cite{AS-20}. In \cite[Definition 6.2]{AS-20} Ardila and Sanchez present a convolution of species maps $f_i:{\bf GP}\to R$ for $i\in [k]$ where $R$ is an algebra over $\F$ with multiplication $m$. The \emph{convolution} of these functions is the species map $f_1\star f_2 \star \cdots \star f_k: {\bf GP} \to R$ defined by $$f_1\star f_2 \star \cdots \star f_k [\A] (P)=\sum\limits_{S_1\sqcup \cdots \sqcup S_k =\A}m^{k-1}\circ f_1[S_1]\otimes \cdots \otimes f_k[S_k]\circ \Delta_{S_1,\dots ,S_k} (P)$$ where $f_i[S_i]:{\bf GP}[S_i] \to R$ are restrictions.

Now we present a chain Tutte polynomial defined on ${\bf GP}$ using a species map which can be found in the proof of Proposition 7.4 in \cite{AS-20}. 

\begin{definition}\label{GP-uniTutte-def}

Let $N_i[\A](P)=u_i^{|\A|}v_i^{z_P(\A)}$ where $z_P$ is the semimodular function defining $P$. The $k^{th}$ \emph{permutahedral chain Tutte polynomial} is 

$$\T^k_P((u_i);(v_i))=N_1\star N_2 \star \cdots \star N_k[\A] (P).$$

\end{definition}

The next fact follows from the construction in Definition \ref{GP-uniTutte-def} and results in \cite{AS-20}.

\begin{proposition}\label{uT-val-GP}

The $k^{th}$ permutahedral chain Tutte polynomial $\T^k$ is a valuation on ${\bf GP}$.

\end{proposition}

\begin{proof}

In \cite[Proposition 7.4]{AS-20} Ardila and Sanchez show that $N_i$ is a valuation on the species of extended generalized permutahedra. Then by \cite[Corollary 6.3]{AS-20} we have the conclusion.\end{proof}

\subsection{Matroid valuations}\label{val-subsec}

First we recall the basis matroid polytope (using \cite{AS-20} as our general reference for this material). A matroid $M$ can be defined via its set of bases $\B(M)$ which are all the independent sets of $M$ whose size is the rank of $M$. Then the matroid polytope of $M$ is $$P(M)=\mathrm{Conv}\{e_B|B\in \B(M)\}$$ where $e_B=e_{i_1}+\cdots +e_{i_r}$ with $B=\{i_1,\dots, i_r\}$. Now we need a few key definitions to state our main result. These are exactly the same as Definitions \ref{subdiv} and \ref{GP-val} restricted to the Hopf submonoid of matroids.

\begin{definition}

A \emph{matroid polyhedral subdivision} of a matroid polytope $P(M)$ is a subdivision of $P(M)$ where all the pieces are matroid polytopes.


\end{definition}

Now we want to know how invariants decompose across subdivisions which gives rise to valuations. We note again that there are many notions of valuations on matroids, but we call a function $f:\mathrm{Mat} \to R$ a \emph{matroid valuation} if it is a valuation of matroid polytopes.

%
%

Now we note that the monoid of matroids is a submonoid of ${\bf GP}$ so we can restrict our permutahedral chain Tutte polynomial to the monoid of matroids. In the case of matroids the coproduct on a set decomposition $\A=S_1\sqcup \cdots\sqcup S_k$ with $A_0=\emptyset$ and $A_i=S_1\sqcup \cdots \sqcup S_i$ is (\cite[Lemma 2.7]{AS-20}) $$\Delta_{S_1,\dots ,S_k}(M)=M[A_0,A_1] \otimes M[A_1,A_2] \otimes \cdots \otimes M[A_{k-1},A_k]$$ where $M[A,B]=(M|B)/A$. For matroid polytopes the associated semimodular function is exactly the rank function ($z_{P(M)}=\rk_M$). Then the $i^{th}$ term in the product of $\T^k$ is $$N_i[S_i](P(M[A_{i-1},A_i]))= u_i^{|S_i|}v_i^{\rk_{(M|{A_i})/A_{i-1}}(S_i)}.$$ Putting this all together the $k^{th}$ permutahedral chain Tutte polynomial of a matroid polytope $P(M)$ is \begin{align} \T^k_{P(M)}((u_i);(v_i))&=\sum\limits_{S_1\sqcup \cdots \sqcup S_k =\A}\left[\prod\limits_{i=1}^kN_i[S_i](P(M[A_{i-1},A_i]))\right]\\
&=\sum\limits_{(A_i)\in \C_{\A}^k}\prod_{i=1}^ku_i^{|A_i-A_{i-1}|}v_i^{\rk(A_i)-\rk(A_{i-1})} \label{uchianTutte-mat}\end{align}  where $A_0=\emptyset$ and $A_k=\A$ in the summation (\ref{uchianTutte-mat}).

\begin{proof}[Proof of Theorem \ref{T-universal}] First we make a change of coordinates by setting $u_{k+1}=1$, $v_1=b_1^{-1}\cdots b_k^{-1}$, $v_{k+1}=a_1\cdots a_k$, for $i\in [k]$ $$u_i=\prod_{j=i}^kb_j,$$ and for $1<i<k+1$  $$v_i=\prod_{j=i}^kb_j^{-1}\prod_{j=1}^{i-1}a_j.$$ Putting this change of coordinates into the formulation of $\T^{k+1}_{P(M)}$ in (\ref{uchianTutte-mat}) we get 

$$\T^{k+1}_{P(M)}((u_i);(v_i))=\sum\limits_{A_1,\dots ,A_{k+1}\in \C_{\A}^{k+1}}\prod_{i=1}^{k+1}u_i^{|A_i-A_{i-1}|}v_i^{\rk(A_i)-\rk(A_{i-1})}$$
$$=\hspace{-1cm}\sum\limits_{A_1,\dots ,A_{k+1}\in \C_{\A}^{k+1}}\prod_{i=1}^{k}\left(\prod_{j=i}^kb_j\right)^{|A_i-A_{i-1}|}\left( \prod_{j=i}^kb_j^{-1}\prod_{j=1}^{i-1}a_j\right)^{\rk(A_i)-\rk(A_{i-1})}\hspace{-1cm}(a_1\cdots a_k)^{\rk(A_{k+1})-\rk(A_{k})}.$$ In each of these terms we see that the exponent of $a_i$ is $$\sum\limits_{j=i}^{k}\rk(A_{j+1})-\rk(A_{j})=\rk(M)-\rk(A_{i})$$ since $A_{k+1}=\A$ and the exponent of $b_i$ is $$\sum\limits_{j=1}^{i}\big(|A_{j}-A_{j-1}| -(\rk(A_j)-\rk(A_{j-1}))\big)=|A_i|-\rk(A_{i}).$$ Hence with this change of coordinates $\T^{k+1}_{P(M)}((u_i)_1^{k+1};(v_i)_1^{k+1})$ is the $k^{th}$ chain Whitney polynomial $W^{k}_M((a_i)_1^{k};(b_i)_1^{k})$. Then restricting to the submonoid of matroids of ${\bf GP}$ using Proposition \ref{uT-val-GP} we can conclude $W^k_M$ and also $T^k_M$ are matroid valuations. \end{proof}

\section{Chain Tutte evaluations and specializations}\label{evals-sec}

In this section we study various evaluations and specializations of chain Tutte polynomials. One idea we visit repeatedly is to find an evaluation that gives some combinatorial information and then apply the recursion from Theorem \ref{recursion}. We start with some basic constant evaluations and then look at some polynomials. For most of this section we will just focus on $T^2$.

\subsection{Constant evaluations} We begin by discussing some constant evaluations of $T^k_M$ which are straightforward to describe combinatorially and are essentially obtained from the classical Tutte polynomial. The following can be concluded directly from the definition.

\begin{proposition}\label{nbasis}

For any matroid $M$ we have $$T^k_M(1,\dots,1;1,\dots , 1)=|\B |$$ where $\B$ is the set of bases of $M$.

\end{proposition}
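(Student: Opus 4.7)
The plan is to unwind the definitions directly. Setting every $x_i = y_i = 1$ in $T^k_M$ corresponds to setting every $a_i = b_i = 0$ in $W^k_M$, so I will work with the Whitney form. Each summand in
\[
W^k_M(0,\dots,0;0,\dots,0) = \sum_{(S_i)_1^k \in \C^k_\A} \prod_{i=1}^k 0^{\rk(M)-\rk(S_i)} \cdot 0^{|S_i|-\rk(S_i)}
\]
is either $0$ or $1$, and equals $1$ precisely when every exponent in the product vanishes, i.e. $\rk(S_i) = \rk(M)$ and $|S_i| = \rk(S_i)$ for every $i \in [k]$.

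The first condition forces $S_i$ to be spanning, the second forces $S_i$ to be independent; together they force each $S_i$ to be a basis. Since all bases share the common size $\rk(M)$, and the chain condition $S_1 \subseteq S_2 \subseteq \cdots \subseteq S_k$ together with equal cardinality forces all containments to be equalities, the surviving chains are precisely the constant chains $S_1 = S_2 = \cdots = S_k = B$ with $B \in \B$. This puts the nonzero terms in bijection with $\B$, yielding the claim $T^k_M(1,\dots,1;1,\dots,1) = |\B|$.

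There is no serious obstacle here; the argument is a direct unpacking, and I expect the proof in the paper to be essentially one or two sentences invoking the same observation. The only small point to state carefully is that the chain condition collapses to equality once all members of the chain have the same cardinality.
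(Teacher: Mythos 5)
Your proof is correct and is exactly the argument the paper intends: the paper gives no explicit proof for this proposition, stating only that it "can be concluded directly from the definition," and your direct unpacking (all exponents must vanish, forcing each $S_i$ to be a spanning independent set, hence a basis, and the chain condition plus equal cardinality collapsing the chain to a single basis) is precisely that definitional argument. The one point you rightly made explicit—that constant chains are the only surviving chains—is the only content beyond the classical $k=1$ evaluation $T_M(1,1)=|\B|$.
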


%
%
%
Next we look at an example of a different constant evaluation for a specific matroid.

\begin{example}\label{Boolean-eval}

Let $\B_n$ be the Boolean matroid of rank $n$ (a.k.a.\ the uniform matroid of rank $n$ on $n$ elements). Then $T^k_{\B_1}(2,\dots , 2;1,\dots ,1)=k+1$ since a flag of subsets of a set with one element is determined by how many empty sets are in the sequence (also just evaluate the expression given in Example \ref{boolean-ex}). Using this and Theorem \ref{Tprod} we get that $$T^k_{\B_n}(2,\dots , 2;1,\dots ,1)=(k+1)^n.$$

\end{example}

The Boolean matroids are fairly simple to understand since there is only one basis. However, in the $k=2$ case we can reformulate the result of Example \ref{Boolean-eval} in general.

\begin{proposition}

Let $M$ be a matroid and $I_m$ be the number of independent sets of size $m$ in $M$. Then $$T^2_{M}(2, 2;1,1)=\sum\limits_{m=0}^{\rk(M)}2^mI_m.$$

\end{proposition}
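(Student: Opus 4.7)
The plan is to directly expand the definition of $T^2_M$ at the specified evaluation and interpret what survives combinatorially. Substituting $x_1=x_2=2$ and $y_1=y_2=1$ into
$$T^2_M(x_1,x_2;y_1,y_2)=\sum\limits_{(S_1,S_2)\in \C^2_\A}(x_1-1)^{\rk(M)-\rk(S_1)}(y_1-1)^{|S_1|-\rk(S_1)}(x_2-1)^{\rk(M)-\rk(S_2)}(y_2-1)^{|S_2|-\rk(S_2)},$$
the factors $(x_i-1)^{\rk(M)-\rk(S_i)}$ become $1$, while each factor $(y_i-1)^{|S_i|-\rk(S_i)} = 0^{|S_i|-\rk(S_i)}$ equals $1$ exactly when $S_i$ is independent and $0$ otherwise.

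Hence only those chains $(S_1,S_2)$ with $S_1\subseteq S_2$ and \emph{both} $S_1$ and $S_2$ independent contribute, each contributing $1$. So
$$T^2_M(2,2;1,1)=\big|\{(S_1,S_2)\in \C^2_\A : S_1,S_2\in \I\}\big|.$$

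The key observation now is that independence is hereditary: if $S_2\in \I$ and $S_1\subseteq S_2$, then $S_1\in \I$ automatically. Therefore the condition on $S_1$ is redundant, and the count reorganizes as a sum over the outer independent set:
$$T^2_M(2,2;1,1)=\sum_{S_2\in \I}\big|\{S_1 : S_1\subseteq S_2\}\big|=\sum_{S_2\in \I}2^{|S_2|}=\sum_{m=0}^{\rk(M)}2^m I_m,$$
grouping independent sets by their size. This completes the argument; there is no real obstacle since the proof is a one-step unpacking of Definition \ref{Def-Tutte} combined with the hereditary axiom for matroids.
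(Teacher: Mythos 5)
Your proof is correct and is exactly the argument the paper intends: this proposition appears in the paper without a written proof, grouped with the constant evaluations that are said to follow directly from the definition. Your unpacking — the $(x_i-1)$ factors becoming $1$, the $(y_i-1)^{|S_i|-\rk(S_i)}=0^{|S_i|-\rk(S_i)}$ factors (with $0^0=1$) selecting chains $S_1\subseteq S_2$ of independent sets, and heredity of independence collapsing the inner count to $2^{|S_2|}$ — is precisely that direct computation.
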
 

If we swap the 1s and 2s in the above we get the `dual' result. Recall that $T^1_{M}(1;2)$ is the number of spanning sets of the matroid. Now we examine $T^2_M(1,1;2,2)$ and again we need more notation. 

\begin{proposition}

Let $SP$ be the set of spanning sets of $M=(\A, \rk)$ and for $X\in SP$ let $NSP(X)$ be the number of subsets of $X$ that are also spanning. Then \begin{align*}T^2_M(1,1;2,2)&=\sum\limits_{X\in SP}NSP(X)\\
&=\sum\limits_{X\in SP}2^{|\A|-|X|} .\\
\end{align*}

\end{proposition}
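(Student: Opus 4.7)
The plan is a direct unpacking of Definition \ref{Def-Tutte} at the evaluation $(x_1,x_2;y_1,y_2)=(1,1;2,2)$, followed by a simple reindexing of the resulting sum.

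First I would substitute into the defining expression
\[
T^2_M(x_1,x_2;y_1,y_2)=\sum_{(S_1,S_2)\in\C^2_\A}(x_1-1)^{\rk(M)-\rk(S_1)}(x_2-1)^{\rk(M)-\rk(S_2)}(y_1-1)^{|S_1|-\rk(S_1)}(y_2-1)^{|S_2|-\rk(S_2)}
\]
the values $x_1=x_2=1$ and $y_1=y_2=2$. The $y_i-1$ factors become $1$, and the $x_i-1$ factors become $0$, with the convention $0^0=1$. Hence a term survives if and only if $\rk(M)-\rk(S_1)=0$ and $\rk(M)-\rk(S_2)=0$, i.e.\ if and only if both $S_1$ and $S_2$ are spanning sets of $M$.

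This reduces the sum to the counting problem
\[
T^2_M(1,1;2,2)=\bigl|\{(S_1,S_2):S_1\subseteq S_2\subseteq\A,\ \rk(S_1)=\rk(S_2)=\rk(M)\}\bigr|.
\]
Reorganizing the sum by the outer set $S_2$, the number of admissible inner sets $S_1$ is precisely the number of spanning subsets of $S_2$, which is $NSP(S_2)$. Summing over all spanning $S_2\in SP$ yields $\sum_{X\in SP}NSP(X)$, as claimed.

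There is no real obstacle here: the argument is a one-line specialization of the definition once one records that $y_i=2$ kills the $|S_i|-\rk(S_i)$ exponent while $x_i=1$ forces $\rk(S_i)=\rk(M)$. The only subtle point to flag in the write-up is the $0^0=1$ convention, which is already implicit in the preceding constant evaluations (e.g.\ Propositions \ref{nbasis} and \ref{indeptsets}).
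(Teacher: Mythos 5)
Your proof is correct and is exactly the argument the paper intends: this proposition is one of the constant evaluations the paper treats as following directly from Definition \ref{Def-Tutte}, and your specialization ($x_i=1$ forcing $\rk(S_i)=\rk(M)$ via the $0^0=1$ convention, $y_i=2$ trivializing the other exponent, then grouping pairs $S_1\subseteq S_2$ by the outer spanning set) is that direct unpacking. Nothing is missing.
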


The evaluation $T^2_M(1,2;2,1)$ is the number of bases because in the summation the only terms that contribute are those which $S_1$ must be spanning and $S_2$ must be independent. If $S_1\subseteq S_2$ with $S_1$ spanning and $S_2$ independent then $S_1=S_2$ and they are a basis. Now we examine the evaluation of swapping  1s and 2s in this last evaluation.

\begin{proposition}\label{2,1;1,2}

For $X\in SP$ a spanning set let $NI(X)$ be the number of independent sets inside $X$. Then $$T^2_M(2,1;1,2)=\sum\limits_{X\in SP}NI(X).$$

\end{proposition}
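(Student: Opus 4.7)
The plan is to prove this by direct substitution into Definition \ref{Def-Tutte} and then counting which pairs in $\C^2_\A$ survive the evaluation. Specifically, I would write
\[
T^2_M(2,1;1,2)=\sum_{(S_1,S_2)\in \C^2_\A}(2-1)^{\rk(M)-\rk(S_1)}(1-1)^{\rk(M)-\rk(S_2)}(1-1)^{|S_1|-\rk(S_1)}(2-1)^{|S_2|-\rk(S_2)},
\]
so that the only factors that can ever destroy a term are $0^{\rk(M)-\rk(S_2)}$ and $0^{|S_1|-\rk(S_1)}$, while the other two factors are identically $1$.

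The first key step is to observe that, using the convention $0^0=1$, these two zero-based factors vanish unless $\rk(S_2)=\rk(M)$ and $|S_1|=\rk(S_1)$. The first condition says exactly that $S_2$ is a spanning set of $M$ (i.e.\ $S_2\in SP$), and the second says that $S_1$ is an independent set of $M$. All surviving terms contribute $1$, so
\[
T^2_M(2,1;1,2)=\#\bigl\{(S_1,S_2)\in \C^2_\A \ \big|\ S_1\text{ independent},\ S_2\in SP\bigr\}.
\]

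The second key step is simply to reorganize the above count by summing over the outer set. Grouping the pairs $(S_1,S_2)$ by their second coordinate $X:=S_2\in SP$, the inner count is the number of independent sets $S_1$ contained in $X$, which is $NI(X)$ by definition. Summing over all spanning sets $X\in SP$ yields the claimed identity. There is no real obstacle here; the entire proof is a substitute-and-interpret argument, following the same template that underlies the analogous evaluations in Propositions \ref{nbasis} and \ref{indeptsets}.
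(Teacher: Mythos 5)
Your proof is correct and takes exactly the approach the paper intends: the paper states this proposition without a written proof, treating it (like Propositions \ref{nbasis} and \ref{indeptsets}) as an evaluation that follows directly from Definition \ref{Def-Tutte}, and your argument is precisely that direct verification. Substituting $(2,1;1,2)$, noting (with the convention $0^0=1$) that a pair $(S_1,S_2)\in\C^2_\A$ survives iff $S_1$ is independent and $S_2$ is spanning, and then grouping by $S_2=X\in SP$ to get $\sum_{X\in SP}NI(X)$ is the intended one-line computation.
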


We can phrase Proposition \ref{2,1;1,2} in terms of graphs.

\begin{example}

Let $G$ be an undirected graph with $M_G$ the associated matroid. Then for any spanning subgraph $G'$ the number of independent sets inside this subgraph is the number of subforests. Hence $$T^2_{M_G}(2,1;1,2)=\sum\limits_{G' \text{ spanning}}NF(G')$$ where $NF(G')$ is the number of subforests in $G'$. These numbers seem interesting for complete graphs $K_n$ and cycle graphs $C_n$. Using the computer algebra system Sage \cite{sage} we compute these evaluations in Tables \ref{complete-2,1;1,2} and \ref{cycle-2,1;1,2}.

\begin{center}
\begin{table}
\begin{tabular}{|l|c|c|c|c|c|}
\hline
$n$&1&2&3&4&5\\
\hline
$T^2_{M_{K_n}}(2,1;1,2)$ &1&2&19&523&36478\\
\hline
\end{tabular}\caption{The (2,1;1,2) evaluations for complete graphs}\label{complete-2,1;1,2}
\end{table}
\end{center}

\begin{center}
\begin{table}
\begin{tabular}{|l|c|c|c|c|c|}
\hline
$n$&3&4&5&6&7\\
\hline
$T^2_{M_{C_n}}(2,1;1,2)$ &19&47&111&255&575\\
\hline
\end{tabular}\caption{The (2,1;1,2) evaluations for cycle graphs}\label{cycle-2,1;1,2}
\end{table}
\end{center}

\end{example}

Now we consider a few evaluations with $k>0$ instead of just $k=2$.

%
%
%

\begin{proposition}\label{1100prop}

If $M$ is a matroid with minimal flat $\hat{0}$ and maximal flat $\hat{1}$ then $$T^k_M(1,1,\dots, 1;0,0,\dots ,0)=\left\{\begin{array}{cl}1 & \text{ if } k\text{ is even}\\ (-1)^{\rk (M)}\gm (\hat{0},\hat{1}) & \text{ if } k\text{ is odd} \end{array}\right. .$$

\end{proposition}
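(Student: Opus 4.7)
The plan is to plug the evaluation point directly into the defining sum
\[
T^k_M((x_i);(y_i)) = \sum_{(S_i)_1^k \in \C^k_\A} \prod_{i=1}^k (x_i-1)^{\rk(M)-\rk(S_i)} (y_i-1)^{|S_i|-\rk(S_i)}
\]
and observe that most chains contribute zero. Setting each $x_i = 1$, the factor $(x_i-1)^{\rk(M)-\rk(S_i)}$ becomes $0^{\rk(M)-\rk(S_i)}$, which vanishes unless $\rk(S_i) = \rk(M)$; under the convention $0^0 = 1$ the surviving chains are precisely those in which every $S_i$ is spanning, equivalently $\bigvee S_i = \hat{1}$ for all $i$.

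On the surviving chains the $y$-exponents simplify: since $\rk(S_i) = \rk(M)$, we have $|S_i| - \rk(S_i) = |S_i| - \rk(M)$, so evaluating at $y_i = 0$ the remaining contribution per chain is $(-1)^{|S_i|-\rk(M)}$ and we obtain
\[
T^k_M(1,\ldots,1;0,\ldots,0) \;=\; (-1)^{k\rk(M)} \sum_{\substack{S_1 \subseteq \cdots \subseteq S_k \subseteq \A \\ \bigvee S_i = \hat{1}\text{ for all }i}} (-1)^{\sum_{i=1}^k |S_i|}.
\]
For $k \geq 2$ the inner sum is exactly the quantity evaluated in Lemma \ref{even-odd-mob}, giving $1$ when $k$ is even and $\gm(\hat{0},\hat{1})$ when $k$ is odd. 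The $k = 1$ case is separate and reduces immediately to Lemma \ref{m1}.

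The substantive bookkeeping is the prefactor $(-1)^{k\rk(M)}$: when $k$ is even this is $1$ and the claim follows at once from Lemma \ref{even-odd-mob}; when $k$ is odd the prefactor is $(-1)^{\rk(M)}$ and one must reconcile the resulting sign with the stated $\gm(\hat{0},\hat{1})$. This is the only step where care is required. Apart from reconciling that sign, the proof is a one-line specialization: restrict the defining sum to spanning chains, factor out the sign, and invoke Lemma \ref{even-odd-mob} (or Lemma \ref{m1} when $k=1$).
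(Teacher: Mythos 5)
Your specialization follows exactly the route the paper takes: set $x_i=1$, $y_i=0$, observe that only chains of spanning sets survive (with the convention $0^0=1$), and feed the resulting alternating sum into Lemma \ref{even-odd-mob} (Lemma \ref{m1} when $k=1$). The one difference is that your bookkeeping is more careful than the paper's. The paper's proof asserts
$$T^k_M(1,\dots,1;0,\dots ,0)=\sum\limits_{\substack{S_1\subseteq \cdots \subseteq S_k\subseteq \A \\ \bigvee S_1=\cdots =\bigvee S_k= \hat{1}}}(-1)^{\sum |S_i|},$$
silently discarding the factor you correctly retain: on a spanning chain each factor is $(-1)^{|S_i|-\rk(M)}$, not $(-1)^{|S_i|}$, so the evaluation equals $(-1)^{k\rk(M)}$ times that sum, exactly as in your display.

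However, the sign you defer (``one must reconcile the resulting sign'') cannot be reconciled, and this is the one real gap in your write-up --- though it stems from the statement itself, not from your computation. By Rota's sign theorem for geometric lattices, $\sgn\, \gm(\hat{0},\hat{1})=(-1)^{\rk(M)}$, so for odd $k$ your formula yields $(-1)^{\rk(M)}\gm(\hat{0},\hat{1})=|\gm(\hat{0},\hat{1})|$, which differs from the claimed $\gm(\hat{0},\hat{1})$ whenever $\rk(M)$ is odd. Concretely, take $k=1$ and $M=U_{1,1}$ a single coloop: then $T^1_M(x;y)=x$, so $T^1_M(1;0)=1$, while $\gm(\hat{0},\hat{1})=-1$; this is consistent with the classical identity $T_M(1,0)=(-1)^{\rk(M)}\gm(\hat{0},\hat{1})$, and inconsistent with the proposition as printed. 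So do not attempt to make the sign vanish: finish your argument by stating the corrected conclusion, namely $1$ for even $k$ (where $(-1)^{k\rk(M)}=1$, so the stated proposition is fine) and $(-1)^{\rk(M)}\gm(\hat{0},\hat{1})$ for odd $k$. Your computation, not the paper's displayed identity, is the correct one.
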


\begin{proof}

We compute $$T^k_M(1,1,\dots, 1;0,0,\dots ,0)=(-1)^{k\cdot \rk(M)}\hspace{-.5cm}\sum\limits_{\substack{S_1\subseteq \cdots \subseteq S_k\subseteq \A \\ \bigvee S_1=\cdots =\bigvee S_k= \hat{1}}}(-1)^{\sum |S_i|} .$$ Next we induct on $k$. The classic $k=1$ case can be found for example in \cite[(6.21)]{BO92} or can be deduced from Lemma \ref{m1}. The $k=2$ base case is just Lemma \ref{mobius-Whitneythm} for $\gm (\hat{1},\hat{1})$. When $k$ is even we have a telescoping decomposition $$\sum\limits_{\substack{S_1 \subseteq \cdots \subseteq S_k\subseteq \A\\ \bigvee S_1=\cdots =\bigvee S_k= \hat{1}}}(-1)^{\sum |S_i|}=\sum\limits_{\substack{S_{k-1}\subseteq S_k\subseteq \A \\ \bigvee S_{k-1}=\bigvee S_k =\hat{1}}}(-1)^{|S_{k-1}|+|S_k|}\left[\sum\limits_{\substack{S_1 \subseteq \cdots \subseteq S_{k-1}\\ \bigvee S_1=\cdots =\bigvee S_{k-1}}}(-1)^{\sum\limits_{i=1}^{k-2} |S_i|}\right].$$ Then by induction the summation in the above bracket is 1. So, using Lemma \ref{mobius-Whitneythm} again we get the total is 1.
Now suppose that $k>2$ is odd. Similarly we separate the sum $$\sum\limits_{\substack{S_1 \subseteq \cdots \subseteq S_k\subseteq \A\\ \bigvee S_1=\cdots =\bigvee S_k= \hat{1}}}(-1)^{\sum |S_i|}=\sum\limits_{\substack{ S_k\subseteq \A \\ \bigvee S_k =\hat{1}}}(-1)^{|S_k|}\left[\sum\limits_{\substack{S_1 \subseteq \cdots \subseteq S_{k-1}\subseteq S_{k}\\ \bigvee S_1=\cdots =\bigvee S_{k}}}(-1)^{\sum\limits_{i=1}^{k-1} |S_i|}\right].$$ Since $k-1$ is even the summation in the bracket above is 1 by induction and then the result follows from Lemma \ref{m1}. \end{proof}

Now switching the zeros and ones we examine the `dual' result whose proof is very similar. Recall that $T^1_M(0;1)$ is the reduced Euler characteristic of the independence complex of $M$ (see \cite[Section 7.7.1]{Ardila-15}).

\begin{proposition}

If $M$ is a matroid with $\chi$ the reduced Euler characteristic of the independence complex of $M$ and $k\geq 1$ then $$T^k_M(0,0,\dots, 0;1,1,\dots ,1)=\left\{\begin{array}{cl}1 & \text{ if } k\text{ is even}\\ (-1)^{\rk(M)}\chi & \text{ if } k\text{ is odd} \end{array}\right. .$$

\end{proposition}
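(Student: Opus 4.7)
The plan is to reduce this statement to Proposition \ref{1100prop} via the duality of Proposition \ref{Tutte-dual}. The key observation is that the evaluation $T^k_M(0,\ldots,0;1,\ldots,1)$ is the Tutte-dual of the evaluation $T^k_M(1,\ldots,1;0,\ldots,0)$ from Proposition \ref{1100prop}, since Tutte duality swaps the roles of the $x$-variables and $y$-variables.

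First I would apply Proposition \ref{Tutte-dual} to obtain
\[
T^k_M(0,\ldots,0;1,\ldots,1) = T^k_{M^*}(1,\ldots,1;0,\ldots,0).
\]
The argument reversal present in Proposition \ref{Tutte-dual} is invisible here because both substituted sequences are constant (a palindrome): every $y_i$ being substituted equals $1$ and every $x_i$ being substituted equals $0$. Then I would apply Proposition \ref{1100prop} directly to the dual matroid $M^*$, which yields $1$ when $k$ is even and $\gm_{L(M^*)}(\hat{0},\hat{1})$ when $k$ is odd.

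The remaining step is to identify $\gm_{L(M^*)}(\hat{0},\hat{1})$ with $\chi$, the reduced Euler characteristic of the independence complex of $M$. This is precisely the $k=1$ case: the recalled classical fact states $T^1_M(0;1) = \chi$, while Proposition \ref{1100prop} applied at $k=1$ gives $T^1_{M^*}(1;0) = \gm_{L(M^*)}(\hat{0},\hat{1})$, and these two quantities are equal by Step~1 specialized to $k=1$. Combining these identifications completes the argument in the odd case, and the even case is already settled by the direct application of Proposition \ref{1100prop}.

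I do not anticipate a genuine obstacle: the whole proof reduces to one line of duality plus one invocation of Proposition \ref{1100prop}. The only point requiring mild care is verifying that the argument-reversal in the duality formula causes no issue, which is immediate from the palindromic nature of the substitution. If one preferred a self-contained direct proof avoiding the duality, one would instead restrict the defining sum to chains $(S_i)$ in which each $S_i$ is \emph{independent} (because $0^{|S_i|-\rk(S_i)}$ kills the others) and set up a telescoping/induction argument parallel to Lemma \ref{even-odd-mob}; however this route is strictly longer and still needs the classical $k=1$ identification with $\chi$.
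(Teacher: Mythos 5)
Your argument takes a genuinely different route from the paper's. The paper proves this proposition directly: the substitution $y_i=1$ (i.e.\ $b_i=0$ in the Whitney form) kills every chain containing a dependent set, and the remaining sum over independent chains is telescoped pairwise, leaving $1$ when $k$ is even and the reduced Euler characteristic when $k$ is odd. That is precisely the ``self-contained direct proof'' you set aside at the end as strictly longer; it is in fact no longer than your argument, and it is the more robust of the two, for the following reason.

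The duality route has a genuine gap: it applies Proposition \ref{1100prop} to $M^*$, and the odd-$k$ case of that proposition fails exactly in the territory that duality forces you into. The dual $M^*$ has a loop precisely when $M$ has a coloop (e.g.\ $M=U_{1,1}$, or any Boolean matroid $\B_n$), and for matroids with loops the odd case of Proposition \ref{1100prop} is false: its proof runs through Lemma \ref{even-odd-mob}, whose odd case invokes Lemma \ref{m1}, and Lemma \ref{m1} is stated only for \emph{simple} matroids and genuinely fails in the presence of a loop. Concretely, take $M=U_{1,1}$, so $M^*=U_{0,1}$, and $k=3$. Then $T^3_{M^*}(1,1,1;0,0,0)=1-1+1-1=0$, whereas $L(M^*)$ is the one-element lattice, so $\gm_{L(M^*)}(\hat{0},\hat{1})=1$: the identity you extract from Proposition \ref{1100prop} is false here. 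Your $k=1$ identification $\gm_{L(M^*)}(\hat{0},\hat{1})=T^1_{M^*}(1;0)=T^1_M(0;1)=\chi$ is false for the same matroid ($1\neq 0$), by the same mechanism. The two errors are equal and cancel, so your final conclusion ($0=\chi=0$) happens to be true, but a derivation two of whose intermediate equalities fail for every matroid with a coloop is not sound. To be fair, Proposition \ref{1100prop} is stated in the paper without any looplessness hypothesis, so your use of it is formally licensed by the paper's text; but the hypothesis is genuinely needed there, and any check of your proof against the lemmas it rests on exposes the failure. A repair would require restricting to coloop-free $M$ (so that $M^*$ is loopless), upgrading Lemma \ref{m1} to the loopless case, and treating coloops separately (say via Proposition \ref{loop} together with the fact that $\chi=0$ when $M$ has a coloop) --- at which point the paper's direct telescoping argument, which uses no M\"obius-function input and no hypothesis on $M$ whatsoever, is plainly the shorter path. (The even-$k$ half of your argument is fine: the even case of Lemma \ref{even-odd-mob} rests only on Lemma \ref{mobius-Whitneythm}, which holds for all matroids, loops included.)
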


\begin{proof}

Assume that $k>1$ is even. In this case there is again a telescoping of the sum but this time the sets are all independent:

$$T^k_M(0,0,\dots, 0;1,1,\dots ,1)=(-1)^{k\cdot \rk(M)}\hspace{-.5cm}\sum\limits_{\substack{S_1\subseteq \cdots \subseteq S_k\subseteq \A \\ \text{independent}}}(-1)^{\sum |S_i|}$$

$$=(-1)^{k\cdot \rk(M)}\hspace{-.5cm}\sum\limits_{\substack{S_k\subseteq \A \\ \text{independent}}}(-1)^{|S_k|}\sum\limits_{\substack{S_{k-1}\subseteq S_k \\ \text{independent}}}(-1)^{|S_{k-1}|}\cdots \sum\limits_{\substack{S_1\subseteq S_2 \\ \text{independent}}}(-1)^{|S_1|}.$$  The individual sum $$\sum\limits_{\substack{S_1\subseteq S_2 \\ \text{independent}}}(-1)^{|S_1|}$$ is 0 unless $S_2=\emptyset$. If $S_2=\emptyset$ then this pair  $$\sum\limits_{\substack{S_2\subseteq S_3 \\ \text{independent}}}(-1)^{|S_2|}\left[\sum\limits_{\substack{S_1\subseteq S_2 \\ \text{independent}}}(-1)^{|S_1|}\right]=(-1)^{|\emptyset|}=1.$$ Since $k$ is even each pair reduces to 1 and we have the desired result. Finally if $k$ is odd then the same decomposition has all the pairs inside the sum reduce to 1 and the last summation term $$\sum\limits_{\substack{S_k\subseteq \A \\ \text{independent}}}(-1)^{|S_k|}$$ remains as is exactly the reduced Euler characteristic (again see \cite[Section 7.7.1]{Ardila-15}). \end{proof}


\subsection{The M\"obius polynomial} The M\"obius polynomial was used by Jurrius in \cite{Jur-12} to study weight enumerators of error-correcting linear codes. Then recently Johnsen and Verdure studied the M\"obius polynomial on error-correcting codes from a commutative algebra view point in \cite{JV-21}. In this subsection we study the M\"obius polynomial exclusively.

\begin{proof}[Proof of Theorem \ref{evalT2-mob}]

First we compute the evaluation of the second chain Tutte polynomial: 

\begin{align*}
T^2_{L(M)}(1-s,1-t;0,0)=& \sum\limits_{A\subseteq B\subseteq E } (-s)^{\crk(A)}(-t)^{\crk(B)}(-1)^{|A|+|B|-\rk(A)-\rk(B)}\\
=&  \sum\limits_{A\subseteq B\subseteq E }(-1)^{|A|+|B|} s^{\crk(A)} t^{\crk(B)}\\
=& \sum\limits_{X\leq Y\in L(M)}\left[  \sum\limits_{\substack{A\subseteq B\subseteq E \\ \bigvee A =X \\ \bigvee B=Y }}(-1)^{|A|+|B|}\right] s^{\crk(X)}t^{\crk(Y)}.
\end{align*}
Then by Lemma \ref{mobius-Whitneythm} we are done since that is exactly the definition of the M\"obius polynomial (see (\ref{Mobdef})). We also get to conclude that the M\"obius polynomial is a valuation from combining Theorem \ref{chainTutte-val} and Proposition \ref{evalT2-mob}. \end{proof}

We see that the M\"obius polynomial is not a Tutte-Grothendieck invariant by using the evaluation in Proposition \ref{evalT2-mob} applied to the sequence of matroids in Example \ref{not-Tutte-inv}. However, applying the recursion from Theorem \ref{recursion} and the evaluation for $T^2$ from Theorem \ref{evalT2-mob} we get a new recursion for the M\"obius polynomial. 

\begin{corollary}\label{Mob-recur}

If $M=(\A,\I)$ is a matroid and $a\in \A$ is not a loop or coloop then \begin{align*}\bar{\chi}_M(s,t)=&\bar{\chi}_{M\bs a}(s,t) +\bar{\chi}_{M/a}(s,t)\\
&+ (-1)^{\rk(M/a)}\sum\limits_{S\subseteq \A-a} (-1)^{\rk(M/S)+|S|}s^{\rk(M/S)}t^{\rk(M/a)-\rk(S)}\chi_{M\bs [\A-S]}(s).\end{align*}

\end{corollary}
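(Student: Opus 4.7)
The plan is to instantiate the generalized deletion/contraction formula of Theorem \ref{recursion} at $k=2$ and evaluate both sides at $(x_1,x_2;y_1,y_2) = (1-s,1-t;0,0)$, then match each piece via Proposition \ref{evalT2-mob} together with the classical identity $T^1_N(1-s,0) = \chi_N(s)$ for the characteristic polynomial. Concretely, Theorem \ref{recursion} with $k=2$ gives $T^2_M = sT^{2,0}_{M,a} + sT^{2,1}_{M,a} + sT^{2,2}_{M,a}$, so evaluating at $(1-s,1-t;0,0)$ yields an identity among polynomials in $s$ and $t$, and Proposition \ref{evalT2-mob} immediately identifies the left-hand side as $\bar{\chi}_M(s,t)$.

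Next I would handle the easy endpoint terms. Since $sT^{2,0}_{M,a} = T^2_{M/a}$ and $sT^{2,2}_{M,a} = T^2_{M\bs a}$ by Definition \ref{split-polys}, Proposition \ref{evalT2-mob} applied to the smaller matroids gives
\[
sT^{2,0}_{M,a}(1-s,1-t;0,0) = \bar{\chi}_{M/a}(s,t), \qquad sT^{2,2}_{M,a}(1-s,1-t;0,0) = \bar{\chi}_{M\bs a}(s,t).
\]
These account for the first two summands on the right of the claimed recursion.

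The substantive step is the middle term $sT^{2,1}_{M,a}$. Unfolding Definition \ref{split-polys} with $k=2$, $j=1$, the sum runs over single subsets $S \subseteq \A - a$ and has the form
\[
sT^{2,1}_{M,a} \;=\; \sum_{S \subseteq \A - a} T^1_{M|_S}(x_1,y_1)\,(x_2-1)^{\rk(M/a)-\rk_{M/a}(S)}(y_2-1)^{|S|-\rk_{M/a}(S)}(x_1-1)^{\rk(M/S)}.
\]
I would then plug in $x_1=1-s$, $x_2=1-t$, $y_1=y_2=0$, recognize $T^1_{M|_S}(1-s,0) = \chi_{M|_S}(s) = \chi_{M\bs[\A - S]}(s)$ (since restriction to $S$ equals deletion of $\A - S$), and collect signs. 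The powers of $-1$ coming from $(-t)^{\rk(M/a)-\rk_{M/a}(S)}$, $(-1)^{|S|-\rk_{M/a}(S)}$, and $(-s)^{\rk(M/S)}$ combine (noting that $-2\rk_{M/a}(S)$ is even) into $(-1)^{\rk(M/a)+|S|+\rk(M/S)}$; pulling the global factor $(-1)^{\rk(M/a)}$ out in front produces exactly the sum appearing in the statement of the corollary.

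The main obstacle is the bookkeeping of exponents and signs in the middle term: one must carefully use that $a$ is not a loop to rewrite corank/rank data on $M/a$ in terms of data on $M$, and make sure the identifications $T^1_{M|_S}(1-s,0) = \chi_{M\bs[\A-S]}(s)$ and $|S|-\rk_{M/a}(S) \equiv |S|+\rk_{M/a}(S) \pmod{2}$ line up with the advertised formula. No new combinatorial input is needed beyond Theorem \ref{recursion}, Proposition \ref{evalT2-mob}, and the classical evaluation giving the characteristic polynomial, so once the signs are tracked the identity follows termwise.
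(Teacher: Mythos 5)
Your proposal is correct and is exactly the paper's (implicit) argument: the corollary is obtained by plugging the evaluation of Proposition \ref{evalT2-mob} into Theorem \ref{recursion} with $k=2$, identifying the $j=0,2$ split terms as $T^2_{M/a}$ and $T^2_{M\bs a}$, and simplifying $sT^{2,1}_{M,a}$ term by term — the same computation the paper carries out explicitly for the analogous Corollary \ref{charop-recur}. The two points you flagged are indeed the only delicate ones, and you resolve them the way the paper intends: the $t$-exponent you derive is $\rk(M/a)-\rk_{M/a}(S)$ (which is how the statement's $\rk(S)$ must be read), and the identification $T^1_{M|_S}(1-s;0)=\chi_{M\bs[\A-S]}(s)$ is the paper's stated convention for the characteristic polynomial.
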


Putting Proposition \ref{evalT2-mob} and Proposition \ref{1100prop} together gives a quick proof that $\bar{\chi}_M(0,0)=1$ (which is an easy consequence of $\gm (\hat{1},\hat{1})=1$).


\subsection{Opposite characteristic polynomial}

Let $M$ be a matroid and $L(M)$ its lattice of flats. The order on $L(M)$ is given by inclusion. If we just reverse this inclusion order we get the lattice $L(M)^{\op}$ which is the same set of flats but with reverse inclusion order (in the case of a hyperplane arrangement $L^{\op}$ would be the regular inclusion order on intersections). This opposite order on flats is not necessarily atomic nor semimodular. However $L(M)^{\op}$ is still a ranked lattice and so we can consider its characteristic polynomial.

\begin{definition}

The \emph{opposite characteristic polynomial} of a matroid $M$ with top flat $\hat{1}$ is \[\chi^{\op}_M(t)=\chi_{L(M)^{\op}}(t)=\sum\limits_{X\in L(M)^{\op}}\gm (\hat{1},X)t^{\crk^{\op}(X)}\] where $\crk^{\op}(X)=\rk(X)$ (because of the opposite order).

\end{definition}

Even this polynomial has an evaluation formula using the second chain Tutte polynomial.

\begin{proposition}\label{opchar-eval}

If $M$ is a matroid then $$T^2_M(1-t,1;0,0)=t^{\rk(M)}\chi^{\op}_{L(M)}(t^{-1}).$$

\end{proposition}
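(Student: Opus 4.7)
The plan is to substitute the specified values directly into the definition of $T^2_M$ and reduce the resulting sum to the desired expression via Lemma \ref{mobius-Whitneythm}. With $x_1=1-t$, $x_2=1$, $y_1=0$, $y_2=0$, the factor $(x_2-1)^{\rk(M)-\rk(S_2)}$ equals $0^{\rk(M)-\rk(S_2)}$, so only terms with $\rk(S_2)=\rk(M)$, equivalently $\bigvee S_2=\hat{1}$, survive. After collecting the signs from $(x_1-1)^{\rk(M)-\rk(S_1)} = (-1)^{\rk(M)-\rk(S_1)}t^{\rk(M)-\rk(S_1)}$ and $(y_1-1)^{|S_1|-\rk(S_1)}(y_2-1)^{|S_2|-\rk(S_2)} = (-1)^{|S_1|+|S_2|-\rk(S_1)-\rk(M)}$, the total sign simplifies (using $(-1)^{2\rk(S_1)}=1$ and $(-1)^{2\rk(M)}=1$) to $(-1)^{|S_1|+|S_2|}$.

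Next I would group the remaining sum according to the flat $X=\bigvee S_1 \in L(M)$. Since $\bigvee S_2=\hat{1}$ is already forced, this gives
\begin{equation*}
T^2_M(1-t,1;0,0) = \sum_{X\in L(M)} t^{\rk(M)-\rk(X)} \sum_{\substack{S_1\subseteq S_2\subseteq \A \\ \bigvee S_1=X \\ \bigvee S_2=\hat{1}}}(-1)^{|S_1|+|S_2|}.
\end{equation*}
By Lemma \ref{mobius-Whitneythm}, the inner sum is exactly $\gm(X,\hat{1})$, so the expression becomes $\sum_{X\in L(M)}\gm(X,\hat{1})\, t^{\crk(X)}$.

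Finally, I would reconcile this with $t^{\rk(M)}\chi^{op}_{L(M)}(t^{-1})$. In $L(M)^{op}$ the bottom element is $\hat{1}$, the total rank is still $\rk(M)$, and for any flat $x$ one has $\rk_{L(M)^{op}}(x) = \crk_M(x)$, hence $\crk_{L(M)^{op}}(x)=\rk_M(x)$; moreover $\gm_{L(M)^{op}}(\hat{1},x) = \gm_{L(M)}(x,\hat{1})$ because reversing the order swaps the arguments of the M\"obius function. Therefore
\begin{equation*}
t^{\rk(M)}\chi^{op}_{L(M)}(t^{-1}) = t^{\rk(M)}\sum_{x\in L(M)}\gm(x,\hat{1})\,t^{-\rk(x)} = \sum_{x\in L(M)}\gm(x,\hat{1})\,t^{\crk(x)},
\end{equation*}
which matches the expression obtained above.

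The argument is almost entirely bookkeeping; the only subtle point is the reinterpretation of the corank in $L(M)^{op}$ (since that lattice is neither atomic nor semimodular in general), so the one place to be careful is that $\rk_{L(M)^{op}}$ and $\crk_{L(M)^{op}}$ are computed intrinsically in the opposite lattice — but because both lattices have the same total rank and corresponding chain lengths, the translation $\crk_{L(M)^{op}}(x)=\rk_M(x)$ is clean, and no further obstacle arises.
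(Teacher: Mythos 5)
Your proof is correct and follows essentially the same route as the paper's: force $S_2$ spanning via the factor $(x_2-1)^{\rk(M)-\rk(S_2)}=0^{\rk(M)-\rk(S_2)}$, reduce the sign to $(-1)^{|S_1|+|S_2|}$, group by the flat $\bigvee S_1$, apply Lemma \ref{mobius-Whitneythm} to recognize $\gm(X,\hat{1})$, and then translate into $L(M)^{op}$. Your final step is in fact slightly more careful than the paper's, since you make explicit the identities $\crk_{L(M)^{op}}(x)=\rk_M(x)$ and $\gm_{L(M)^{op}}(\hat{1},x)=\gm_{L(M)}(x,\hat{1})$ that the paper only gestures at.
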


\begin{proof}

We compute the evaluation \begin{equation}\label{opchar1}T^2_M(1-t,1;0,0)=\sum\limits_{\substack{A\subseteq B \subseteq \A \\ B \text{ spanning}}} (-1)^{\rk(M)-\rk(A)+|A|+|B|-\rk(A)-\rk(B)}t^{\rk(M)-\rk(A)}.\end{equation} Since $B$ is spanning we know that $\rk(B)=\rk(M)$ hence (\ref{opchar1}) becomes \begin{equation}\label{opchar2}\sum\limits_{\substack{A\subseteq B \subseteq \A \\ B \text{ spanning}}} (-1)^{|A|+|B|}t^{\rk(M)-\rk(A)}.\end{equation} Then we split the sum of (\ref{opchar2}) on $A$ by flats as \begin{equation}\label{opchar3} \sum\limits_{X\in L(M)} \left(\sum\limits_{\substack{A\subseteq B \subseteq \A \\ \bigvee A=X \\ \bigvee B=\hat{1}}} (-1)^{|A|+|B|}\right)t^{\rk(M)-\rk(X)}.\end{equation} Applying Lemma \ref{mobius-Whitneythm} to the inner sum of (\ref{opchar3}) we get \begin{equation}\label{opchar4} \sum\limits_{X\in L(M)} \gm (X,\hat{1})t^{\rk(M)-\rk(X)}.\end{equation} Since the M\"obius function can be defined recursively from the top element $\hat{1}$ we have that (\ref{opchar4}) becomes \begin{equation}\label{opchar5} t^{\rk(M)}\sum\limits_{X\in L(M)^{\op}} \gm (X,\hat{1})(t^{-1})^{\rk(X)}.\end{equation} Since the corank in $L(M)^{\op}$ is actually the rank in $L(M)$ the expression in (\ref{opchar5}) is actually $t^{\rk(M)}\chi^{\op}_{L(M)}(t^{-1})$ and we have proved the result. \end{proof}

Again we get to combine the evaluation result, in this case Proposition \ref{opchar-eval}, with Theorem \ref{chainTutte-val} to conclude the opposite characteristic polynomial is a valuation.

\begin{corollary}\label{opchar-val}

The opposite characteristic polynomial is a valuation on matroids.

\end{corollary}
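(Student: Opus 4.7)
The plan is to derive the valuation property for $\chi^{op}$ directly from the established valuation property for $T^2$, using Proposition \ref{opchar-eval} as a bridge. By Theorem \ref{chainTutte-val} the chain Tutte polynomial $T^2_M$ is a matroid valuation, with values in the polynomial ring $\Z[x_1,x_2,y_1,y_2]$. Specialization is a ring homomorphism, so substituting $x_1=1-t$, $x_2=1$, $y_1=y_2=0$ produces a $\Z[t]$-valued matroid valuation, namely the function $V_M(t):=T^2_M(1-t,1;0,0)$. By Proposition \ref{opchar-eval}, $V_M(t)=t^{\rk(M)}\chi^{op}_M(t^{-1})$.

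The next step is to eliminate the prefactor $t^{\rk(M)}$ and the inversion $t\mapsto t^{-1}$ in a manner that respects the valuation identity. The key observation is that the rank $\rk(M)$ is a subdivision-invariant quantity: the matroid polytope $P(M)$ lies in the affine hyperplane $\sum_i x_i=\rk(M)$, and every piece and every non-empty intersection of pieces in a matroid polytope subdivision of $P(M)$ lies in this same hyperplane and is itself the matroid polytope of a matroid of rank $\rk(M)$. Consequently, in the valuation identity for $V_M$, the common factor $t^{\rk(M)}$ can be pulled out across both sides of the inclusion-exclusion sum, leaving the identical relation with $V_{M_J}$ replaced by $\chi^{op}_{M_J}(t^{-1})$.

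Finally, applying the ring homomorphism $\Z[t,t^{-1}]\to\Z[t]$ that substitutes $t\mapsto t^{-1}$ (equivalently, regarding the identity as one of Laurent polynomials and then renaming the variable) yields exactly the valuation identity for $\chi^{op}_M(t)$. Since $\chi^{op}_{\emptyset}=0$ by the usual convention, this completes the argument.

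The only step that requires genuine care is confirming that rank is preserved throughout a matroid polytope subdivision, including on all intersections; this is however automatic from the hyperplane containment $P(M)\subseteq\{x:\sum_i x_i=\rk(M)\}$, which is inherited by every face of $P(M)$ and hence by every common face of two pieces. Everything else is bookkeeping with ring homomorphisms applied to a known valuation.
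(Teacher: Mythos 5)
Your proof is correct and takes essentially the same route as the paper, which proves this corollary in one line by combining Proposition \ref{opchar-eval} with Theorem \ref{chainTutte-val}. Your version is in fact more careful than the paper's: you justify the step the paper leaves implicit, namely that the prefactor $t^{\rk(M)}$ and the substitution $t\mapsto t^{-1}$ can be stripped from the valuation identity because every piece and every intersection in a matroid polytope subdivision lies in the hyperplane $\sum_i x_i=\rk(M)$ and so has the same rank as $M$.
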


We also restrict the recursion from the chain Tutte polynomial in Theorem \ref{recursion} to the opposite characteristic polynomial. Again note that the opposite characteristic polynomial is not a Tutte-Grothendieck invariant (use the evaluation in Proposition \ref{opchar-eval} with Example \ref{not-Tutte-inv}).

\begin{corollary}\label{charop-recur}

If $M=(\A,\rk)$ is a matroid, $a\in\A$ is not a loop or coloop, and $\mathrm{SPD}=\{S\subseteq \A -a |\ S\ \text{spans}\ M/a\}$ then $$\chi^{\op}_{M}(t)=\chi^{\op}_{M\bs a}(t)+\chi^{\op}_{M/a}(t)+(-1)^{\rk(M)-1}\hspace{-.2cm}\sum\limits_{S\in \mathrm{SPD}}(-1)^{|S|}\chi_{M|S}(t).$$

\end{corollary}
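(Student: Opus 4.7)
My plan is to combine the generalized deletion/contraction recursion Theorem~\ref{recursion} (at $k=2$) with the evaluation formula of Proposition~\ref{opchar-eval}, which identifies $\chi^{op}_M$ with a specialization of $T^2_M$. The whole argument is parallel to the derivation of the M\"obius recursion in Corollary~\ref{Mob-recur}: apply the recursion to $T^2_M$, specialize to the evaluation point that produces $\chi^{op}$, and then identify the middle split term with the characteristic-polynomial sum appearing in the statement.

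Concretely, I would start from
\[
T^2_M = T^2_{M/a} + sT^{2,1}_{M,a} + T^2_{M\bs a}
\]
and specialize each of the four polynomials at $(x_1,x_2;y_1,y_2)=(1-t,1;0,0)$. By Proposition~\ref{opchar-eval}, each of $T^2_M$, $T^2_{M\bs a}$, and $T^2_{M/a}$ becomes $t^{\rk(\cdot)}$ times the corresponding $\chi^{op}(t^{-1})$, and since $a$ is neither a loop nor a coloop we have $\rk(M\bs a)=\rk(M)$ and $\rk(M/a)=\rk(M)-1$. Rescaling by $t^{\rk(M)}$ and replacing $t$ by $t^{-1}$ turns the outer two terms into the $\chi^{op}_{M\bs a}(t)$ and $\chi^{op}_{M/a}(t)$ summands in the statement.

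The main content is evaluating the middle piece. Using the defining formula
\[
sT^{2,1}_{M,a}(x_1,x_2;y_1,y_2)=\sum_{S\subseteq \A-a} T^1_{M|_S}(x_1;y_1)(x_2-1)^{\rk(M/a)-\rk_{M/a}(S)}(y_2-1)^{|S|-\rk_{M/a}(S)}(x_1-1)^{\rk(M/S)},
\]
the specialization $x_2=1$ makes the factor $(x_2-1)^{\rk(M/a)-\rk_{M/a}(S)}$ vanish unless $\rk_{M/a}(S)=\rk(M/a)$, which forces $S$ to span $M/a$; this produces precisely the index set of the corollary. On the surviving terms, $T^1_{M|_S}(1-t;0)$ is the classical evaluation of the Tutte polynomial giving $\chi_{M|_S}(t)$, while $(y_2-1)^{|S|-\rk_{M/a}(S)}|_{y_2=0}$ and $(x_1-1)^{\rk(M/S)}|_{x_1=1-t}$ contribute the sign factors and the stray power of $t$ needed to match the claimed form.

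The main obstacle is purely bookkeeping: tracking the $(-1)$'s produced by each of the three factors $(x_1-1)$, $(y_1-1)$, $(y_2-1)$ at their specializations, together with the sign $(-1)^{\rk(M)-1}$ arising from $\rk(M/a)=\rk(M)-1$ in the exponent of $(y_2-1)$, and confirming that the powers of $t$ coming from $(x_1-1)^{\rk(M/S)}$ and from the $t^{\rk(M)}$ rescaling of Proposition~\ref{opchar-eval} collapse consistently. Using that $\chi_{M|_S}(t)=T^1_{M|_S}(1-t;0)$ absorbs the $\rk(S)$-dependent sign and power-of-$t$ data from $(x_1-1)^{\rk(M/S)}=(-t)^{\rk(M)-\rk(S)}$, the remaining sign is exactly $(-1)^{\rk(M)-1}(-1)^{|S|}$ as required. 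Once this sign computation is completed, the three pieces recombine into the stated recursion.
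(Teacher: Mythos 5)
Your strategy coincides with the paper's own proof: apply Theorem~\ref{recursion} at $k=2$, specialize at $(1-t,1;0,0)$, convert the outer terms by Proposition~\ref{opchar-eval}, and simplify the middle split term. However, the two places you defer to ``bookkeeping'' are exactly where the argument fails, and no bookkeeping can complete it. First, the contraction term: Proposition~\ref{opchar-eval} gives $T^2_{M/a}(1-t,1;0,0)=t^{\rk(M/a)}\chi^{op}_{M/a}(t^{-1})=t^{\rk(M)-1}\chi^{op}_{M/a}(t^{-1})$, so after dividing by $t^{\rk(M)}$ and substituting $t\mapsto t^{-1}$ this summand is $t\,\chi^{op}_{M/a}(t)$, not $\chi^{op}_{M/a}(t)$; you record the rank drop $\rk(M/a)=\rk(M)-1$ but then discard the stray factor of $t$ it produces. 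Second, and more seriously, the factor $(x_1-1)^{\rk(M/S)}=(-t)^{\rk(M)-\rk_M(S)}$ in the middle term is \emph{not} uniform over the surviving index set: a set $S\subseteq\A-a$ spanning $M/a$ need not span $M$; it can have $\rk_M(S)=\rk(M)-1$, in which case this factor is an honest $-t$ rather than a sign. Your claim that it is ``absorbed,'' leaving exactly the sign $(-1)^{\rk(M)-1}(-1)^{|S|}$, is false (there is also the smaller issue that $\chi_{M|_S}(t)=(-1)^{\rk_M(S)}T^1_{M|_S}(1-t;0)$, so the absorption costs a $\rk_M(S)$-dependent sign).

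Carrying the specialization through correctly yields
\[
\chi^{op}_{M}(t)=\chi^{op}_{M\bs a}(t)+t\,\chi^{op}_{M/a}(t)-\sum_{\substack{S\subseteq \A-a\\ S\ \text{spans}\ M/a}}(-1)^{|S|}\,t^{\rk_M(S)}\chi_{M|_S}(t^{-1}),
\]
which involves the reciprocal evaluations $t^{\rk_M(S)}\chi_{M|_S}(t^{-1})$ and cannot be rearranged into the stated form. Indeed the Corollary as printed fails already for $M=U_{2,3}$ (any $a$): the spanning sets of $M/a\cong U_{1,2}$ inside $\A-a$ are $\{1\},\{2\},\{1,2\}$, so the right-hand side is $(t-1)^2+(t-1)+\bigl[2(t-1)-(t-1)^2\bigr]=3(t-1)$, whereas $\chi^{op}_{U_{2,3}}(t)=t^2-3t+2$; the discrepancy persists under the paper's sign convention $\chi_M(t)=T_M(1-t;0)$. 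You should know that the published proof makes exactly the same leap---it replaces $(-t)^{\rk(M/S)}$ by $(-t)^0$ for every $S$ spanning $M/a$---so your proposal faithfully reproduces the paper's argument; but the sign-and-degree computation that both you and the paper treat as routine is precisely the point where the proof, and the statement itself, breaks down.
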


\begin{proof} Because of the formula in Proposition \ref{opchar-eval} we plug that evaluation into Theorem \ref{recursion}. We simplify the split Tutte polynomial \begin{align*}sT^{2,1}_{M,a}(1-t,1;0,0)&=\sum\limits_{S\subseteq \A-a}T^1_{M|S}(1-t;0)(0)^{\rk(M/a)-\rk_{M/a}(S)}(-1)^{|S|-\rk_{M/a}(S)}(-t)^{\rk(M/S)}\\
&=\sum\limits_{S\in \mathrm{SPD}}\chi_{M|S}(t)(-1)^{|S|-\rk(M/a)}(-t)^0\\ 
\end{align*}which finishes the proof. \end{proof}

From this recursive result we can deduce the next corollary (even though it can be derived from the definition and properties of the M\"obius function).

\begin{corollary}

If $M$ is a simple matroid with $\rk(M)>0$ then $\chi^{\op}_M(1)=0$.

\end{corollary}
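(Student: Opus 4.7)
The plan is to recognize $\chi^{op}_M(1)$ as the total sum of Möbius function values over $L(M)^{op}$ starting from its minimum element, and then invoke the standard identity that such a sum vanishes in any finite bounded poset with more than one element.

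First I would substitute $t=1$ directly into the definition
$$\chi^{op}_M(1) \;=\; \sum_{x \in L(M)^{op}} \gm_{L(M)^{op}}(\hat{1}, x),$$
observing that all the $t^{\crk(x)}$ factors collapse to $1$, so the corank exponents drop out and no rank bookkeeping is needed. Next I would note that in the opposite lattice $L(M)^{op}$, the top flat $\hat{1}$ of $L(M)$ is by definition the \emph{minimum} element. Hence the right-hand side is literally $\sum_{x} \gm_{L(M)^{op}}(\hat{0}_{L(M)^{op}}, x)$. Applying the defining recursion of the Möbius function (the second displayed identity in the Möbius subsection of the Preliminaries) with $X = \hat{0}_{L(M)^{op}}$ and $Z = \hat{1}_{L(M)^{op}} = \hat{0}_{L(M)}$ shows that this total sum equals $\delta_{\hat{0},\hat{1}}$ in $L(M)^{op}$, i.e.\ it is $0$ as soon as $L(M)^{op}$ has more than one element.

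The final step is to verify this nontriviality hypothesis. Since $M$ is simple with $\rk(M) > 0$, the minimum flat $\hat{0}_{L(M)} = \emptyset$ has rank $0$, while the maximum flat $\hat{1}_{L(M)} = \A$ has rank $\rk(M) > 0$; these are distinct, so $L(M)^{op}$ has at least two elements and the conclusion follows. There is no real obstacle here: the only care needed is to correctly identify $\hat{1}_{L(M)}$ as the bottom of $L(M)^{op}$ and to notice that the corank weighting is trivial at $t=1$, at which point the statement reduces to a one-line application of the Möbius recursion.
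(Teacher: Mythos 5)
Your proof is correct, but it follows a genuinely different route from the paper's. The paper proves this corollary by induction on the number of ground set elements, taking the single coloop (where $\chi^{op}_M(t)=t-1$) as the base case and invoking its deletion/contraction-type recursion for $\chi^{op}$ (Corollary \ref{charop-recur}, itself a specialization of Theorem \ref{recursion}) for the inductive step; the point of that proof is to showcase the new recursion. You instead argue directly from the definition: at $t=1$ the corank weights collapse, the sum becomes $\sum_{x}\gm(\hat{0}_{L(M)^{op}},x)$ over all of $L(M)^{op}$ taken from its minimum element (the top flat of $L(M)$), and the defining recursion of the M\"obius function forces this sum to vanish as soon as $L(M)^{op}$ has at least two elements, which simplicity together with $\rk(M)>0$ guarantees. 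This is exactly the observation the paper itself concedes when it remarks that the corollary ``is clear from the definition.'' Your argument buys elementarity and slightly more generality: it uses only that $\chi^{op}$ depends on the lattice of flats, so simplicity is not actually needed (positive rank suffices), and it sidesteps points the induction must quietly handle---that a contraction $M/a$ of a simple matroid need not be simple, and that a simple matroid all of whose elements are coloops admits no element $a$ to which Corollary \ref{charop-recur} applies. What the paper's proof buys in exchange is a demonstration that its chain-Tutte recursion machinery reproduces such facts.
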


\begin{proof}

We proceed by induction on the number of ground set elements of $M$. If there is one element and the rank is one then $\chi^{\op}_M(t)=t-1$. Then for larger ground sets the recursion in Corollary \ref{charop-recur} finishes the proof.\end{proof}


\subsection{Generalized M\"obius polynomial}

In \cite{JW-20} a function $J$ was studied that generalizes the M\"obius function and it was used to define and study a generalized M\"obius polynomial. Also in \cite{JW-20} it was conjectured that this generalized M\"obius polynomial was a matroid valuation. We prove that conjecture in this subsection.

\begin{definition}

The $J$-function on a matroid $M$ with ground set $\A$ is $J:\Fl_\A^3\to R$ where $R$ is a commutative ring and $\Fl_\A^3=\{(X,Y,Z)\in L(M)^3| X\leq Y\leq Z \}$ is 3-flags of flats in $L(M)$ defined recursively by $J(X,X,X)=1$ for all $X\in L(M)$ and $$\sum\limits_{X\leq A\leq Y\leq B\leq Z}J(A,Y,B)=\gd_3(X,Y,Z)$$ for all $(X,Y,Z)\in \Fl_\A^3$ where $\gd_3$ is the 3-variable Kronecker delta function.

\end{definition}

In \cite{JW-20} it was shown that this $J$-function satisfies various generalizations of the M\"obius function. This led the authors there to define analogous generalized characteristic and M\"obius polynomials.

\begin{definition}

The generalized $J$-M\"obius polynomial of $M$ is $$\M_M(t)=\sum\limits_{(X,Y,Z)\in \Fl^3(L(M))}J(X,Y,Z)t^{\crk (X)+\crk(Y)+\crk(Z)}.$$

\end{definition}

Also in \cite[Proposition 6.10]{JW-20} it was shown that this generalized $J$-M\"obius polynomial satisfies a decomposition with the classic characteristic polynomials.

\begin{proposition}[\cite{JW-20}]\label{JW-prop-GMob}

If $M$ is a matroid then $$\M_M(t)=t^{\rk (M)}\sum\limits_{y\in L(M)}t^{\crk(y)}\chi_{L(M)^y}(t)\chi_{(L(M)^\op)^y}(t^{-1}).$$

\end{proposition}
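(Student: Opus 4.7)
The plan is to first derive a closed form for the $J$-function on the lattice of flats $L(M)$, and then use it to recognize the right-hand side algebraically. The key claim is that
\[
J(x,y,z) = \gm(x,y)\,\gm(y,z) \qquad \text{for every } x \leq y \leq z \text{ in } L(M).
\]
To prove this, fix $y$ and write $F(a,b) := J(a,y,b)$ for $(a,b) \in [\hat 0, y] \times [y, \hat 1]$. The defining recursion reads
\[
\sum_{a \in [x,y]} \sum_{b \in [y,z]} F(a,b) = \gd_3(x,y,z),
\]
a double summation over a product of intervals whose right-hand side equals $1$ when $x = y = z$ and vanishes otherwise. Standard M\"obius inversion applied successively in each factor yields $F(a,b) = \gm(a,y)\gm(y,b)$, and one can verify this closed form directly: the double sum factors, and each piece $\sum_{a \in [x,y]}\gm(a,y)$ evaluates to $1$ when $x = y$ and $0$ otherwise by the defining property of the M\"obius function.

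With the closed form in hand, I would substitute into the definition of $\M_M(t)$ and regroup by the middle coordinate $y$ to obtain
\[
\M_M(t) = \sum_{y \in L(M)} t^{\crk(y)}\Bigl(\sum_{x \leq y} \gm(x,y)\,t^{\crk(x)}\Bigr) \Bigl(\sum_{z \geq y} \gm(y,z)\,t^{\crk(z)}\Bigr).
\]
The right bracket is immediately $\chi_{L^y}(t)$ for $L^y := [y, \hat 1]$: the M\"obius function of the subinterval agrees with that of $L(M)$, and one verifies $\crk_{L^y}(z) = \rk(M) - \rk(z) = \crk(z)$. For the left bracket, I would view $(L^{op})^y$ as the interval $[\hat 0, y]$ with its order reversed. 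Using the general identities $\gm_{P^{op}}(p,q) = \gm_P(q,p)$ and $\crk_{P^{op}}(p) = \rk_P(p)$ applied to $P = [\hat 0, y]$ gives
\[
\chi_{(L^{op})^y}(t^{-1}) = \sum_{x \leq y} \gm(x,y)\,t^{-\rk(x)},
\]
so that multiplying by $t^{\rk(M)}$ produces exactly the left bracket. Pulling this factor outside the outer sum completes the identification and matches the statement.

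The main obstacle is bookkeeping: one must reconcile rank and corank conventions when passing to a subinterval and then reversing its order, and verify that the substitution $t \mapsto t^{-1}$ combined with the external $t^{\rk(M)}$ correctly converts $t^{-\rk(x)}$ into $t^{\crk(x)}$. None of the individual steps is deep, but the precise conventions for $L^y$ versus $(L^{op})^y$ and for the characteristic polynomial on a ranked lattice that need not be atomic must be tracked with care for the factors to align.
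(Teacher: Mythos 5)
This proposition is never proved in the paper at all: it is quoted from \cite{JW-20} (Proposition 6.10 there) and used as a black box to deduce that the generalized $J$-M\"obius polynomial is a valuation, so there is no in-house argument to compare yours against --- what you wrote is, in effect, the missing proof, and it is correct. The heart of it is the closed form $J(x,y,z)=\gm(x,y)\gm(y,z)$. Because the defining recursion keeps the middle flat $y$ fixed, the sum over $x\le a\le y\le b\le z$ genuinely factors into a sum over $[x,y]$ times a sum over $[y,z]$; M\"obius inversion in each factor then both forces uniqueness of $J$ and produces the product formula, and your direct check via $\sum_{a\in[x,y]}\gm(a,y)=\gd(x,y)$ confirms it satisfies the recursion. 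Given that, $\M_M(t)$ regroups over the middle coordinate exactly as you write, and your convention bookkeeping comes out right: $L^y=[y,\hat 1]$ has $\crk_{L^y}(z)=\crk(z)$, while $(L^{\op})^y=[\hat 0,y]^{\op}$ has bottom element $y$, M\"obius function $\gm_{(L^{\op})^y}(y,x)=\gm(x,y)$, and corank equal to $\rk(x)$, so that $t^{\rk(M)}\chi_{(L^{\op})^y}(t^{-1})=\sum_{x\le y}\gm(x,y)t^{\crk(x)}$, which is the left bracket. One stylistic suggestion for a written-up version: isolate the uniqueness of $J$ as an explicit observation (induction on the lengths of $[x,y]$ and $[y,z]$, noting that $J(x,y,z)$ occurs in its own defining equation with coefficient $1$ and all other summands are indexed by strictly smaller pairs of intervals), rather than folding it into the phrase ``standard M\"obius inversion,'' since that is the only point where anything could conceivably be hidden.
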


Now putting together Proposition \ref{JW-prop-GMob} and Corollary \ref{opchar-val} we can answer \cite[Question 6.26]{JW-20}.

\begin{corollary}

The generalized $J$-M\"obius polynomial is a valuation on matroids.

\end{corollary}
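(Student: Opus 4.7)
The plan is to present $\M_M(t)$ as (a scalar multiple of) a convolution of two matroid valuations in the sense of Ardila--Sanchez, and then apply their convolution theorem (Theorem~C of \cite{AS-20}) to conclude.

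First, I would use Proposition~\ref{JW-prop-GMob} together with the standard identifications $L(M)^y \cong L(M/y)$ and $(L(M)^{op})^y \cong L(M|_y)^{op}$ to rewrite
\[ \M_M(t) = t^{\rk(M)} \sum_{y \in L(M)} t^{\crk(y)} \chi_{M/y}(t)\, \chi^{op}_{M|_y}(t^{-1}). \]
Next I would invoke the classical fact that $\chi_N(t) = 0$ whenever $N$ has a loop; since $M/S$ has a loop precisely when $S$ is not closed in $M$, every non-flat subset contributes zero and the sum extends to all of $2^\A$ without change:
\[ \M_M(t) = t^{\rk(M)} \sum_{S \subseteq \A} t^{\crk(S)} \chi_{M/S}(t)\, \chi^{op}_{M|_S}(t^{-1}). \]

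Now I would define species maps on matroids by $f(N) = \chi^{op}_N(t^{-1})$ and $g(N) = t^{\rk(N)} \chi_N(t)$. Each is a matroid valuation: $f$ is a valuation by Corollary~\ref{opchar-val} (coefficient-wise substitution $t \mapsto t^{-1}$ preserves the valuation property), and $g$ equals $t^{\rk(N)} T^1_N(1-t,0)$, where $T^1_N(1-t,0)$ is a valuation by Theorem~\ref{chainTutte-val} and the factor $t^{\rk(N)}$ is constant across any matroid polytope subdivision of $N$ (all matroids appearing in such a subdivision share the rank of $N$), so scaling preserves the inclusion--exclusion identity. Using $\rk(M/S) = \crk(S)$, the matroid convolution equals
\[ (f \star g)(M) = \sum_{S \subseteq \A} f(M|_S)\, g(M/S) = \sum_{S \subseteq \A} \chi^{op}_{M|_S}(t^{-1})\, t^{\crk(S)} \chi_{M/S}(t), \]
so $\M_M(t) = t^{\rk(M)} (f \star g)(M)$. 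Theorem~C of \cite{AS-20} then gives that $f \star g$ is a matroid valuation, and multiplying by $t^{\rk(M)}$ (again constant on any subdivision of $P(M)$) preserves the property.

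The main obstacle is the rank bookkeeping. The formula of Proposition~\ref{JW-prop-GMob} contains $N$-dependent $t$-powers together with the inverse-variable evaluation $\chi^{op}(t^{-1})$, neither of which is manifestly a valuation; these must be distributed between $f$ and $g$ so that both species maps are individually valuations before the convolution theorem can be applied. The key observation that unlocks this is that $\rk(N)$ is constant across any matroid polytope subdivision of $N$, so $t^{\rk(N)}$ acts as a scalar in every inclusion--exclusion check. An alternative route, which I would pursue if the rank bookkeeping became unwieldy, would be to establish a Whitney-type formula for the $J$-function paralleling Lemma~\ref{mobius-Whitneythm} and thereby realize $\M_M(t)$ directly as an evaluation of $T^3_M$, deducing valuativity from Theorem~\ref{chainTutte-val} alone.
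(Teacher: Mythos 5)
Your proposal is correct and is essentially the paper's own argument: the paper proves this corollary precisely by combining Proposition~\ref{JW-prop-GMob} with Corollary~\ref{opchar-val}, which is exactly the route you take. The only difference is that you make explicit the bookkeeping the paper leaves implicit — extending the sum over flats to all subsets via the vanishing of $\chi_{M/S}$ on non-closed $S$, checking that the rank prefactors are constant across matroid polytope subdivisions, and invoking the Ardila--Sanchez convolution theorem (Theorem~C of \cite{AS-20}) already used throughout the paper — all of which is sound.
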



\subsection{Expected codimension and Ford's $S$-polynomial}\label{G-inv-sub}

Understanding properties in general of the realization space of a matroid (a.k.a. matroid variety) is an elusive endeavor (see \cite{Va-06} and \cite{STW-21}). In \cite{Ford-15} Ford studied the dimension of matroid varieties and when the dimensions could be computed combinatorially. To do this Ford defined the following.

\begin{definition}[\cite{Ford-15}]

Let $M=(\A,\rk)$ be a matroid and $a:2^{\A}\to \ZZ$ be defined recursively by $a(\emptyset)=0$ and $$a(A)=\crk(A)-\sum\limits_{B\subset A}a(B).$$ The \emph{expected codimension of} $M$ is $$ec(M)=\sum\limits_{A\subseteq \A}(\rk(M)-\rk(A))a(A).$$

\end{definition}

The idea behind this definition is that counting Pl\"ucker coordinates gives a way to guess the codimension of the matroid variety. Then Ford in \cite{Ford-15} proves that for positroids this expected codimension is actually the codimension of the positroid's realization space. The final result in Ford's paper on this expected dimension is that it is a valuative property. Hence a natural question is how we can produce this expected dimension from these chain Tutte polynomials. It turns out we can do this via another polynomial which Ford defines in \cite[Lemma 5.4]{Ford-15}.

\begin{definition}[\cite{Ford-15}]

Let $M=(\A,\rk)$ be a matroid. The \emph{Ford $S$-polynomial of $M$} is $$S_M(x,y,z)=\sum\limits_{A\subseteq B\subseteq \A}x^{|A|-\rk(A)}y^{\rk(M)-\rk(B)}z^{|B|-|A|}.$$

\end{definition}

Next in \cite{Ford-15} Ford proves that this polynomial $S_M(x,y,z)$ is a matroid valuation using a similar technique to Speyer's proof in \cite{Speyer08} that the Tutte polynomial is a matroid valuation. It turns out that we can find this $S_M$ polynomial in $T^2$ which gives another proof that it is a matroid valuation.

\begin{proposition}[{\cite[Lemma 5.4]{Ford-15}}]\label{S_M-val}

The Ford $S$-polynomial is a matroid valuation.

\end{proposition}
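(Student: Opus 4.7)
The plan is to realize Ford's $S$-polynomial directly as a specialization of the second chain Whitney polynomial $W^2_M$ and then appeal to Theorem~\ref{chainTutte-val}, which already establishes that $W^2_M$ is a matroid valuation.

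First I would line up the two formulas. Ford's polynomial
\[S_M(x,y,z)=\sum_{A\subseteq B\subseteq \A}x^{|A|-\rk(A)}y^{\rk(M)-\rk(B)}z^{|B|-|A|}\]
uses the exponents $|A|-\rk(A)$, $\rk(M)-\rk(B)$, and $|B|-|A|$, whereas
\[W^2_M(a_1,a_2;b_1,b_2)=\sum_{A\subseteq B\subseteq \A}a_1^{\rk(M)-\rk(A)}a_2^{\rk(M)-\rk(B)}b_1^{|A|-\rk(A)}b_2^{|B|-\rk(B)}\]
naturally produces the four exponents $\rk(M)-\rk(A)$, $\rk(M)-\rk(B)$, $|A|-\rk(A)$, and $|B|-\rk(B)$. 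The troublesome exponent $|B|-|A|$ is not one of these four, but it can be written as the alternating combination
\[|B|-|A|=[\rk(M)-\rk(A)]-[\rk(M)-\rk(B)]+[|B|-\rk(B)]-[|A|-\rk(A)].\]
This identity suggests substituting $a_1=z$, $a_2=yz^{-1}$, $b_1=xz^{-1}$, $b_2=z$ inside the Laurent ring $\Z[x,y,z^{\pm 1}]$. A direct exponent tally then shows
\[W^2_M(z,\,yz^{-1};\,xz^{-1},\,z)=\sum_{A\subseteq B\subseteq \A}x^{|A|-\rk(A)}y^{\rk(M)-\rk(B)}z^{|B|-|A|}=S_M(x,y,z),\]
and because $|B|\geq|A|$ whenever $A\subseteq B$, the final expression actually lies in $\Z[x,y,z]$.

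The remaining step is to invoke Theorem~\ref{chainTutte-val}: $W^2_M$ is a matroid valuation taking values in $\Z[a_1,a_2,b_1,b_2]$, and matroid valuations are preserved under post-composition with ring homomorphisms. Composing with the $\Z$-algebra map $\Z[a_1,a_2,b_1,b_2]\to \Z[x,y,z^{\pm 1}]$ defined by the substitution above yields a matroid valuation, and restricting the codomain to the subring $\Z[x,y,z]$ where the image actually lives gives that $S_M$ is a matroid valuation. The main obstacle is only locating the correct substitution; the use of negative powers of $z$ in the intermediate ring is harmless because the final polynomial has non-negative exponents throughout, and the preservation of valuations under ring maps works identically in the Laurent setting.
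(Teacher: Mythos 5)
Your proof is correct and is essentially the paper's own argument: the paper verifies the identity $T^2_M(z+1,yz^{-1}+1;xz^{-1}+1,z+1)=S_M(x,y,z)$ and invokes Theorem~\ref{chainTutte-val}, and since $T^2_M((x_i);(y_i))=W^2_M((x_i-1);(y_i-1))$, this is exactly your substitution $W^2_M(z,yz^{-1};xz^{-1},z)=S_M(x,y,z)$. Your additional remarks (the explicit exponent tally and the harmlessness of the intermediate Laurent ring) are sound elaborations of details the paper leaves implicit.
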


\begin{proof} Since $T^2$ is a matroid valuation and $$T^2_M(z+1,yz^{-1}+1;xz^{-1}+1,z+1)=S_M(x,y,z)$$ we have concluded the result.\end{proof}

Using the proof of \cite[Theorem 5.3]{Ford-15} we get the formula for expected codimension in terms of $T^2$.

\begin{corollary}\label{ec-T^2}

For any matroid $M$ $$ec(M)=\frac{\partial}{\partial x}\frac{\partial}{\partial y}\left[T^2_M(z+1,yz^{-1}+1;xz^{-1}+1,z+1)\right] (1,1,-1).$$

\end{corollary}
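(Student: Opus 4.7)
The plan is to reduce the claim to a direct calculation involving Ford's $S$-polynomial, using the identity established in the proof of Proposition~\ref{S_M-val}.

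First, I would substitute the polynomial identity
$$T^2_M(z+1, yz^{-1}+1; xz^{-1}+1, z+1) = S_M(x,y,z)$$
into the right-hand side of the desired formula. Since this is a genuine identity of rational functions in $(x,y,z)$ (indeed of polynomials, after clearing), the partial derivatives on either side agree, so it suffices to prove
$$ec(M) \;=\; \frac{\partial}{\partial x}\frac{\partial}{\partial y}\,S_M(x,y,z)\Bigr|_{(x,y,z)=(1,1,-1)}.$$

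Second, I would differentiate the explicit sum defining $S_M$ term by term. This gives
$$\frac{\partial}{\partial x}\frac{\partial}{\partial y}\,S_M(x,y,z) \;=\; \sum_{A\subseteq B\subseteq \A}(|A|-\rk(A))(\rk(M)-\rk(B))\,x^{|A|-\rk(A)-1}\,y^{\rk(M)-\rk(B)-1}\,z^{|B|-|A|},$$
where any term with $|A|-\rk(A)=0$ or $\rk(M)-\rk(B)=0$ is killed by its coefficient, so there is no issue with negative exponents at $x=y=1$. Evaluating at $(1,1,-1)$ produces
$$D \;:=\; \sum_{A\subseteq B\subseteq \A}(|A|-\rk(A))(\rk(M)-\rk(B))(-1)^{|B|-|A|}.$$

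Third, I would prove $D=ec(M)$ via M\"obius inversion on the Boolean lattice $2^{\A}$. Ford's recursion $a(A)=\crk(A)-\sum_{B\subsetneq A}a(B)$ is equivalent to $\sum_{B\subseteq A}a(B)=|A|-\rk(A)$, and inverting on $2^\A$ yields
$$a(A) \;=\; \sum_{B\subseteq A}(-1)^{|A|-|B|}(|B|-\rk(B)).$$
Plugging this into $ec(M)=\sum_{A\subseteq \A}(\rk(M)-\rk(A))\,a(A)$ and swapping the order of summation so that $B$ becomes the outer index gives
$$ec(M) \;=\; \sum_{B\subseteq A\subseteq \A}(|B|-\rk(B))(\rk(M)-\rk(A))(-1)^{|A|-|B|},$$
which equals $D$ after the cosmetic relabeling $A\leftrightarrow B$.

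The only real obstacle is the M\"obius inversion bookkeeping in the third step, and this is precisely the content of Ford's proof of Theorem~5.3 in \cite{Ford-15}. Thus, once the substitution from Proposition~\ref{S_M-val} is made, one can either carry out the computation above explicitly or simply invoke Ford's result to conclude.
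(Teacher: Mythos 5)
Your proposal is correct and takes essentially the same route as the paper: the paper's proof consists precisely of the substitution $T^2_M(z+1,yz^{-1}+1;xz^{-1}+1,z+1)=S_M(x,y,z)$ from Proposition~\ref{S_M-val} followed by an appeal to the proof of Theorem~5.3 in \cite{Ford-15}, which is exactly the differentiation-plus-M\"obius-inversion computation you carry out explicitly. One point worth noting: your rewriting of Ford's recursion as $\sum_{B\subseteq A}a(B)=|A|-\rk(A)$ silently interprets $\crk(A)$ in that recursion as the nullity $|A|-\rk(A)$ rather than the corank $\rk(M)-\rk(A)$ used elsewhere in the paper; this nullity reading is indeed Ford's actual definition (and is forced, since under the corank reading the stated identity fails already for $U_{2,3}$), so your computation is sound.
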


Now examine the recursive result on $T^2$, Theorem \ref{recursion}, applied to Ford's $S$-polynomial.

\begin{proposition}\label{S-del-con}

If $M=(\A,\rk)$ is a matroid and $a\in \A$ is not a loop or coloop then \begin{align*}S_M(x,y,z)=&S_{M\bs a}(x,y,z)+S_{M/a}(x,y,z)\\
&+z\sum\limits_{A\subseteq B\subseteq \A-a}  x^{|A|-\rk(A)}   y^{\rk (M/a)-\rk_{M/a}(B)}z^{|B|-|A|} \ .\end{align*}

\end{proposition}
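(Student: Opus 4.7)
\emph{Proof proposal.} The plan is to specialize the $k=2$ case of Theorem~\ref{recursion} under the change of variables that realizes Ford's $S$-polynomial as an evaluation of $T^2_M$, which is recorded in the proof of Proposition~\ref{S_M-val}:
$$T^2_M(z+1,\, yz^{-1}+1;\, xz^{-1}+1,\, z+1) = S_M(x,y,z).$$
Since $a$ is neither a loop nor a coloop, Theorem~\ref{recursion} yields
$$T^2_M = sT^{2,0}_{M,a} + sT^{2,1}_{M,a} + sT^{2,2}_{M,a} = T^2_{M/a} + sT^{2,1}_{M,a} + T^2_{M\bs a}.$$
Evaluating at the above specialization, the outer two summands become $S_{M/a}(x,y,z)$ and $S_{M\bs a}(x,y,z)$, so it remains to identify the evaluation of the middle split Tutte polynomial with $z\sum_{A\subseteq B\subseteq \A-a} x^{|A|-\rk(A)} y^{\rk(M/a)-\rk_{M/a}(B)} z^{|B|-|A|}$.

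For that identification I would use the rearranged form of Definition~\ref{split-polys}, which in the $(k,j)=(2,1)$ case reads
\begin{align*}
sT^{2,1}_{M,a} &= \sum_{S_1\subseteq S_2\subseteq \A-a} (x_1-1)^{\rk(M\bs a)-\rk(S_1)}(y_1-1)^{|S_1|-\rk(S_1)} \\
&\qquad\cdot (x_2-1)^{\rk(M/a)-\rk_{M/a}(S_2)}(y_2-1)^{|S_2|-\rk_{M/a}(S_2)}.
\end{align*}
Substituting $(x_1,y_1,x_2,y_2) = (z+1,\,xz^{-1}+1,\,yz^{-1}+1,\,z+1)$ and writing $A=S_1$, $B=S_2$, one sees that the $x$-exponent collects to $|A|-\rk(A)$ (contributed solely by the $(y_1-1)$ factor) and the $y$-exponent to $\rk(M/a)-\rk_{M/a}(B)$ (contributed solely by the $(x_2-1)$ factor).

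The main (and essentially only) computational step is tracking the exponent of $z$. Summing the four contributions gives
$$\bigl(\rk(M\bs a)-\rk(A)\bigr) - \bigl(|A|-\rk(A)\bigr) - \bigl(\rk(M/a)-\rk_{M/a}(B)\bigr) + \bigl(|B|-\rk_{M/a}(B)\bigr),$$
which telescopes to $\rk(M\bs a) - \rk(M/a) + |B| - |A|$. This is precisely where both hypotheses on $a$ enter: $a$ not a coloop gives $\rk(M\bs a)=\rk(M)$, and $a$ not a loop gives $\rk(M/a)=\rk(M)-1$, so $\rk(M\bs a)-\rk(M/a)=1$ and the $z$-exponent simplifies to $|B|-|A|+1$. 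Factoring the single $z$ out of the sum produces exactly the claimed middle summand, and assembling the three pieces gives the stated recursion.
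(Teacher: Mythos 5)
Your proposal is correct and matches the paper's approach: the paper presents Proposition \ref{S-del-con} as a direct application of Theorem \ref{recursion} (the $k=2$ case) to the evaluation $T^2_M(z+1,yz^{-1}+1;xz^{-1}+1,z+1)=S_M(x,y,z)$ from Proposition \ref{S_M-val}, which is exactly your argument with the split-term bookkeeping written out. Your identification of where the hypotheses enter (not a coloop giving $\rk(M\bs a)=\rk(M)$, not a loop giving $\rk(M/a)=\rk(M)-1$, so the $z$-exponent becomes $|B|-|A|+1$) is precisely the computation needed, analogous to the paper's explicit treatment of the middle split term in the proof of Corollary \ref{charop-recur}.
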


Note that the recursion of Proposition \ref{S-del-con} appears very close to $S_{M\bs a}(x,y,z)+(1+z)S_{M/ a}(x,y,z)$. However this is not correct and if it were correct then it would imply that $ec(M)=ec(M\bs a)$ which is also not true in general.

Now putting together Proposition \ref{S-del-con} and Corollary \ref{ec-T^2} we have a new recursion for the expected codimension of a matroid.

\begin{corollary}

If $M=(\A,\rk)$ is a matroid with $a\in \A$ not a loop or coloop then $$ec(M)=ec(M\bs a)+ec(M/ a)-\sum\limits_{A\subseteq B\subseteq \A}(|A|-\rk (A))(\rk (M/ a) -\rk_{M/ a}(B))(-1)^{|B|-|A|}.$$

\end{corollary}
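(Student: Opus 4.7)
The plan is to combine Corollary \ref{ec-T^2} with the three-term recursion for Ford's $S$-polynomial from Proposition \ref{S-del-con}. Using the change of variables $T^2_M(z+1,yz^{-1}+1;xz^{-1}+1,z+1)=S_M(x,y,z)$ established in the proof of Proposition \ref{S_M-val}, Corollary \ref{ec-T^2} simplifies to
$$ec(M)=\left.\frac{\partial^2 S_M}{\partial x\,\partial y}\right|_{(x,y,z)=(1,1,-1)}.$$
So the whole task is to apply $\partial^2/\partial x\,\partial y$ to both sides of the $S$-polynomial recursion of Proposition \ref{S-del-con} and evaluate at $(1,1,-1)$.

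Because mixed partials are linear in the polynomial, the two terms $S_{M\setminus a}(x,y,z)$ and $S_{M/a}(x,y,z)$ on the right-hand side contribute $ec(M\setminus a)+ec(M/a)$ immediately. It remains to differentiate the correction term
$$R(x,y,z)=z\sum_{A\subseteq B\subseteq \A-a} x^{|A|-\rk(A)}\,y^{\rk(M/a)-\rk_{M/a}(B)}\,z^{|B|-|A|}.$$
For each fixed pair $(A,B)$, the product rule yields
$$\frac{\partial^2}{\partial x\,\partial y}\bigl[x^{|A|-\rk(A)}y^{\rk(M/a)-\rk_{M/a}(B)}\bigr]\bigg|_{x=y=1}=(|A|-\rk(A))(\rk(M/a)-\rk_{M/a}(B)),$$
while the leading $z$ evaluates to $-1$ and $z^{|B|-|A|}$ evaluates to $(-1)^{|B|-|A|}$. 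Summing over all chains $A\subseteq B\subseteq \A-a$ thus contributes the negative of the sum displayed in the corollary. One may then formally extend the range of summation to $A\subseteq B\subseteq \A$ to match the statement, since $\rk_{M/a}$ is defined only on subsets of $\A-a$ and the convention contributes nothing from sets containing $a$.

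No genuine obstacle arises; the entire argument is a one-pass product-rule computation built on top of results already proved. The only place where care is needed is sign bookkeeping: the outer $z$ in front of the correction sum in Proposition \ref{S-del-con} is precisely what produces the minus sign in front of the summation in the claimed formula. This is also what prevents the tempting but false simplification into $ec(M\setminus a)+(1+z)\,ec(M/a)$ that the paper explicitly cautions against in the paragraph following Proposition \ref{S-del-con}.
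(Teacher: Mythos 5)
Your proposal is correct and is essentially the paper's own argument: the paper justifies this corollary in one line, by "putting together" Proposition \ref{S-del-con} and Corollary \ref{ec-T^2}, i.e.\ exactly your one-pass application of $\partial^2/\partial x\,\partial y$ to the $S$-polynomial recursion followed by evaluation at $(1,1,-1)$, with the outer factor of $z$ producing the minus sign. Your side remark about the summation range correctly flags that the honest output of the computation is a sum over $A\subseteq B\subseteq \A - a$, so the range $A\subseteq B\subseteq \A$ in the paper's statement is best read as a notational slip (or as a convention that terms involving $\rk_{M/a}$ of sets containing $a$ are omitted), not as something your proof needs to account for.
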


\bibliographystyle{amsplain}

\bibliography{whitneyb}

\end{document}